\theoremstyle{thmstyleone}%
\newtheorem{theorem}{Theorem}
\newcommand{\x}{\mathbf x}
\newcommand{\q}{\mathbf q}
\newcommand{\f}{\mathbf f}
\newcommand{\fq}{\f^{\mathrm q}}
\newcommand{\diff}{\mathrm d}
\newcommand{\tmax}{\mathrm t_{\max}}
\newcommand{\tm}{\mathrm t_{\mathrm m}}
\newcommand{\sign}{\mathrm{sign}}
\newcommand{\R}{\mathbb{R}}
\newcommand{\kQSS}[1]{k^{(#1)}_{\mathrm{QS}(t_0,t_f)}}
\newcommand{\dQ}{\Delta \mathbf{Q}}
\newcommand{\dQabs}{\Delta Q_{\mathrm{abs}}}
\newcommand{\dQrel}{\Delta Q_{\mathrm{rel}}}
\newcommand{\lacts}[1] {A_{x_i}^{(#1)}}
\newcommand{\ilact}[1] {a_{x_i}^{(#1)}}
\newcommand{\CQSS}{CheQSS\xspace}
\date{}
\title[On General LIQSS Methods]{On General Linearly Implicit Quantized State System Methods}
\author*[1,2]{\fnm{Mariana} \sur{Bergonzi}}\email{bergonzi@cifasis-conicet.gov.ar}
\author[1]{\fnm{Joaquín} \sur{Fernandez}}\email{fernandez@cifasis-conicet.gov.ar}
\author[1,2]{\fnm{Ernesto} \sur{Kofman}}\email{kofman@cifasis-conicet.gov.ar}
\affil*[1]{\orgdiv{French-Argentine International Center for Information and System Sciences (CIFASIS)}, \orgname{CONICET}, \orgaddress{\country{Argentina}}}
\affil[2]{\orgdiv{FCEIA}, \orgname{Universidad Nacional de Rosario}, \orgaddress{\country{Argentina}}}
\begin{document}
\abstract{This work proposes a methodology to develop new numerical integration algorithms for ordinary differential equations based on state quantization, generalizing the notions of Linearly Implicit Quantized State Systems (LIQSS) methods. Using this idea, two novel sub-families of algorithms are designed that improve the performance of current LIQSS methods while preserving their properties regarding stability, global error bound and efficient event handling capabilities. 
The features of the new algorithms are studied in two application examples where the advantages over classic numerical integration algorithms is also analyzed.}

\keywords{Quantized State Systems, Numerical Integration Algorithms, Computational Efficiency}



\maketitle

\section{Introduction}
The class of numerical ODE solvers known as Quantized State System (QSS) methods replaces the conventional time stepping approach with the quantization of system states for integration \citep{kofman2001quantized,cellier2006continuous}. This approach yields asynchronous, discrete-event simulation frameworks that frequently provide substantial benefits when compared against traditional numerical differential equation solvers.

The fundamental principle of QSS techniques involves recalculating a state variable solely upon its exhibiting a predefined deviation. Consequently, for systems displaying heterogeneous activity \citep{bergonzi2025activity}—where certain states undergo major fluctuation as others remain largely constant—QSS algorithms efficiently leverage this attribute by executing operations exclusively for the changing states. Furthermore, the event-driven operation inherent to these methods significantly eases the challenge of managing discontinuities.

Given these properties, QSS methodologies are generally well-applied across various fields, such as the simulation of power electronic converters \citep{migoni2015quantization}, tackling advection-diffusion-reaction problems \citep{bergero2016time}, and simulating spiking neural architectures \citep{bergonzi2023quantization}, among others. In studies concerning these domains, QSS-based simulations can often achieve computation speeds exceeding tenfold faster than their most performant classic ODE counterparts.

The family of QSS methods is composed of the first-order QSS1 \citep{kofman2001quantized}, the second-order QSS2 \citep{kofman2002second}, and the third-order QSS3 \citep{kofman2006third} algorithms. In addition, there are Linearly Implicit QSS methods of orders 1 to 3 \citep{migoni2013linearly} that are able to simulate efficiently some types of stiff systems.

In this work, we develop a new methodology to design LIQSS methods that is based on proposing the polynomials that compute the difference between the states and the quantized states of the system. Using this methodology, we retrieve and improve the original formulation of LIQSS methods and also develop a new family of methods of orders 1 to 3 that use Chebyshev polynomials and are called \CQSS. Under ideal assumptions, \CQSS methods maximize the step sizes for a given tolerance prescription.

The novel methods are then evaluated in two simulation examples: a spatially-discretized one-dimensional advection-diffusion-reaction equation and a spiking neural network. We show that in both examples the novel methods consistently outperform previously existing QSS and LIQSS methods and also outperform classic discrete-time ODE solvers like DOPRI and CVODE.

The manuscript is organized as follows: Section~\ref{sec:background} provides an introduction to QSS methods and their properties. Then, Section~\ref{sec:main} contains the main results, including the LIQSS design methodology, the novel algorithms and their properties. After that, Section~\ref{sec:results} presents the examples and results, analyzing and comparing the performance of the methods developed. Finally, Section~\ref{sec:conclusions} concludes the work, exploring also some future lines of research.

\section{Background} \label{sec:background}

\subsection{QSS Methods}
Consider a system of ODEs of the form
\begin{equation}\label{eq:ode}
    \dot{\x}_a(t)=\f(\x_a(t),t)
\end{equation}
where $\x_a(t) \in \R^N$ is the state vector,  the first order Quantized State System (QSS1) method \citep{kofman2001quantized} solves an approximate ODE called Quantized State System:
\begin{equation}\label{eq:qss}
    \dot \x(t)=\fq(\q(t),t)
\end{equation}
where $\x=[x_1,x_2,\ldots,x_N]^T$ is the vector of states, $\q=[q_1,q_2,\ldots,q_N]^T$ is the vector of \emph{quantized states} and $\fq(\q,t)$ is a piecewise constant approximation of $\f(\x,t)$ with respect to $t$.

In Eq.\eqref{eq:qss}, each state $x_i(t)$ is approximated by a quantized state $q_i(t)$ that follows a piecewise constant trajectory evolving according to the following rule: the quantized value is updated only when the deviation $|x_i(t)-q_i(t)|$ exceeds a \emph{quantum} $\Delta Q_i$. 
This results in asynchronous, event-driven simulations where each quantized state is updated at its own event times.  

Formally, each state $x_i(t)$ and its corresponding quantized state $q_i(t)$ are related by a \emph{hysteretic quantization function} as follows:  
\begin{equation*}
 q_i(t) =
 \begin{cases}
  q_i(t_k) \quad &\text{if} \;|x_i(t)-q_i(t_k)| < \Delta Q_i	\\
  x_i(t) \quad &\text{otherwise}
 \end{cases}
\end{equation*}
for $t_k< t\leq t_{k+1}$, where $t_{k+1}$ is the first time after $t_k$ such that $|x_i(t)-q_i(t_k)| = \Delta Q_i$.  
Figure~\ref{fig:qss}(a) illustrates the trajectories of a state and the corresponding quantized state.

\begin{figure}[ht]
    \centering
    \includegraphics[width=\linewidth]{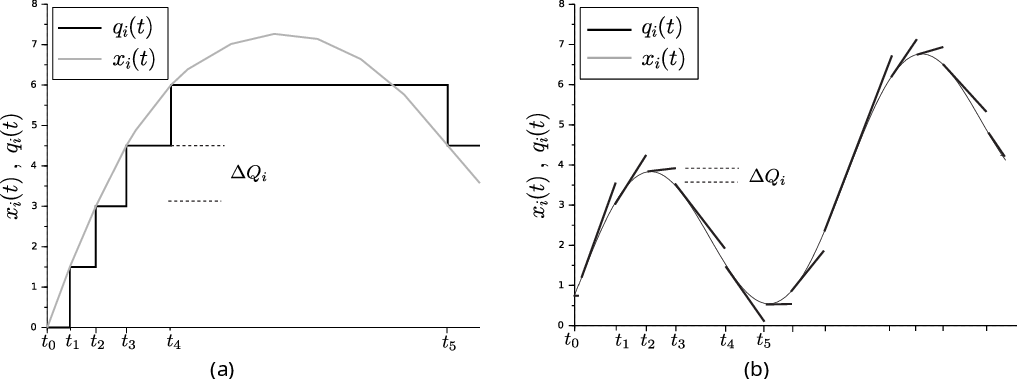}
    \caption{Typical state and quantized state trajectories: (a) QSS1, (b) QSS2}
    \label{fig:qss}
\end{figure}

Since $\q(t)$ follows piecewise constant trajectories and $\fq(\q,t)$ is also piecewise constant on $t$, then the state derivative $\dot \x(t)$ is piecewise constant and the state $\x(t)$ follows piecewise linear trajectories. This allows obtaining the analytic solution of Eq.\eqref{eq:qss} and implementing a simple algorithm for QSS simulation. 

\subsubsection{Higher order QSS methods}
It can be easily noticed that the number of changes in $q_i(t)$ is inversely proportional to $\Delta Q_i$ which in turn implies that the number of steps in a QSS1 simulation increases linearly with the accuracy.

To improve accuracy without increasing linearly the number of steps, higher-order extensions were introduced: in QSS2, quantized states evolve along piecewise linear segments (yielding parabolic state trajectories) as shown in Figure~\ref{fig:qss}(b), while in QSS3 they follow parabolic segments (producing cubic trajectories) \citep{kofman2002second,kofman2004discrete}. In these schemes, the number of steps grows roughly with the square root (QSS2) or cubic root (QSS3) of the desired precision—making them much more efficient than QSS1.  

\subsubsection{Practical Advantages of QSS Methods}
Each QSS step is local to the quantized state $q_i$ that changes its value, and calculations and made only on that state and in the state derivatives that directly depend on it. That way, the computational costs of each QSS step in a large sparse systems is considerably cheaper than that of a classic solver. This is particularly advantageous in system that exhibit heterogeneous activity \citep{bergonzi2025activity} since the calculations are concentrated only in the states that have significant changes. 

QSS methods also handle discontinuities efficiently since the occurrence of a discontinuity does not require to reinitialize the simulation like in classic ODE solvers. Here, each event requires only to recompute the state derivatives that are directly affected by the corresponding discontinuity. Moreover, since trajectories are piecewise polynomials, detecting threshold crossings is straightforward.

\subsubsection{Theoretical Properties of QSS Methods}
The QSS approximation of Eq.\eqref{eq:qss} can be rewritten as
\begin{equation}\label{eq:disturbance}
    \dot \x(t)=\fq(\x(t)+\Delta \x(t),t)
\end{equation}
with $\Delta \x(t)\triangleq \q(t)-\x(t)$. Thus, the use of the quantized state instead of the state in the approximation is equivalent to add a disturbance $\Delta \x(t)$. This disturbance is bounded component-wise by the quantum, since $|\Delta x_i(t)|\leq \Delta Q_i$ because the quantized states $q_i(t)$ and the states $x_i(t)$ never differ from each other more than the quantum $\Delta Q_i$.

Based on this observation, the following properties were established \citep{cellier2006continuous}:
\begin{itemize}
    \item Convergence: Assuming Lipschitz conditions on $\f$, the solutions of the QSS approximation of Eq.\eqref{eq:qss} go to the solutions of the original ODE of Eq.\eqref{eq:ode} when the quantum $\dQ\to 0$ and $\|\fq(\x,t)- \f(\x,t)\| \to 0$.
    \item Global Error Bound: In Linear Time Invariant (LTI) systems the global error is bounded by a quantity that depends linearly on the quantum $\dQ$.
\end{itemize}

In addition, there are recent results that establish convergence properties of QSS methods applied to stochastic differential equations \citep{pizzi2025quantized}.

\subsection{LIQSS Methods}
In presence of stiff systems, QSS methods exhibit trajectories with spurious oscillations similar to those of explicit classic variable step size solver. In consequence, the number of steps they perform is usually unacceptable. 

In order to address stiff dynamics, the QSS family has been further extended  through the development of the Linearly Implicit QSS (LIQSS) algorithms \citep{migoni2013linearly}. 
The core idea behind LIQSS is inspired by implicit integration techniques, which evaluate the state derivatives at future time instants. Instead of solving implicit equations or performing matrix inversions, LIQSS exploits the fact that in QSS methods the future value of each quantized state is known ($q_i(t) \pm \Delta Q_i$). When the future derivative is predicted to be positive, the quantized state is set to $q_i(t)=x_i(t)+ \Delta Q_i$, if negative, it is set to  $q_i(t)=x_i(t)- \Delta Q_i$. Figure~\ref{fig:liqss1_1}(a) shows typical LIQSS1 trajectories. 

\begin{figure}[ht]
    \centering
    \includegraphics[width=\linewidth]{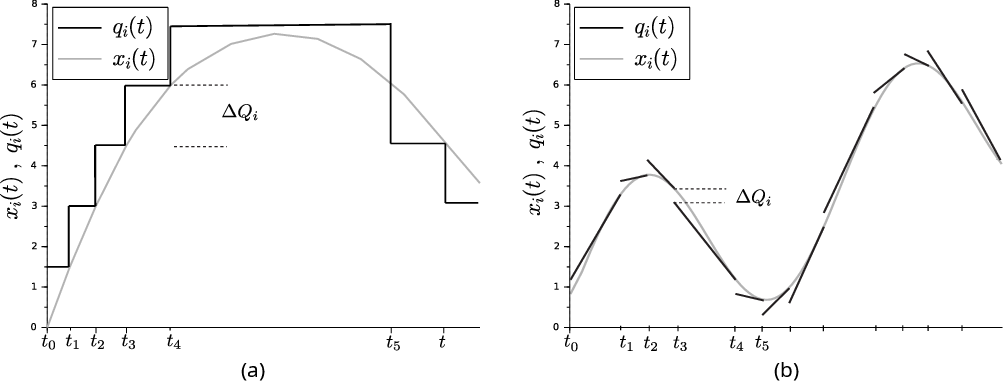}
   \caption{Typical state and quantized state trajectories: (a) LIQSS1, (b) LIQSS2}
   \label{fig:liqss1_1}
 
\end{figure}

It may happen that the selection of a positive increment $q_i(t)=x_i(t)+ \Delta Q_i$ yields a negative state derivative $\dot x_i(t)<0$ while the selection of a negative increment $q_i(t)=x_i(t) - \Delta Q_i$ yields a positive state derivative $\dot x_i(t)>0$.  In that situation, a value $q_i^*\in(x_i(t) - \Delta Q_i,x_i(t) + \Delta Q_i)$ must exist for which $\dot x_i(t)=0$, so the quantized state is selected as $q_i(t)=q_i^*$. In this \emph{equilibrium} situation the algorithm, in principle, does not need to perform more steps. 

The prediction of the sign of the state derivative and the solution for $q_i^*$ are obtained using a linear approximation of the components of Eq.\eqref{eq:qss} given by 
\begin{equation}\label{eq:dxi}
 \dot x_i(t)=\f^{\mathrm q}_i(\q(t),t)=a_i(t^i_j) q_i(t)+ u_i(t)
\end{equation}
where $t^i_j$ is the time of the $j$--th quantized state change in $q_i(t)$, i.e., $t^i_j\leq t < t^i_{j+1}$, and 

\begin{equation}\label{eq:linearap}
 a_i(t^i_j)=\frac{\partial f_i}{\partial x_i}\bigg\rvert_{\q(t^i_j)},\quad 
 u_i(t)=\f^{\mathrm q}_i(\q(t),t)-a_i(t^i_j) q_i(t)
\end{equation}

\subsubsection{Higher Order LIQSS Methods}
Higher-order versions (LIQSS2 and LIQSS3) extend these ideas to piecewise linear and quadratic quantized state trajectories, combining the advantages of higher-order QSS methods with improved performance for stiff problems. 

It is worth mentioning that the definition of LIQSS2 given in \citep{migoni2013linearly} computes the quantized state such that each linear segment  starts at $q_i(t_j)=x_i(t_j)\pm \Delta Q_i$ verifying also that $\dot q_i(t_j)=\dot x_i(t_j)$ (so $q_i$ had a future value of $x_i$ but not a future slope). This was later improved in \citep{di2019improving} where  the quantized state was computed such that  $\dot q_i(t_j+h)=\dot x_i(t_j+h)$ where $t_j+h$ is the time of the next change in $q_i$. Figure~\ref{fig:liqss1_1}(b) shows typical LIQSS2 trajectories.  A similar idea is mentioned regarding LIQSS3, although it was not implemented in practice. 

\subsubsection{Theoretical properties of LIQSS methods}
In the original formulation of LIQSS methods, the quantized state $q_i(t)$ was allowed to differ up to $2\cdot \Delta Q_i$ from the state $x_i(t)$ in case the state trajectory changes its direction and starts moving away $q_i(t)$. For this reason, the disturbance introduced by the difference $q_i(t)-x_i(t)$ can be twice that of QSS methods and consequently the error bounds are also twice those of QSS algorithms.

\subsubsection{Limitations of LIQSS for Stiff Systems Simulation}
The fact that the linear approximation is only performed using the main diagonal of the Jacobian matrix implies that the LIQSS algorithms are not able to efficiently handle general stiff systems. LIQSS are limited to efficiently integrate systems where  the stiffness is due to the presence of large terms in the main diagonal of the Jacobian matrix. 

This feature was improved in \citep{di2019improving} by detecting spurious oscillations appearing between pairs of variables and updating simultaneously both quantized states with a linearly implicit scheme.  These ideas were further improved in \citep{elbellili2025improving}.   

\subsection{Relative Error Control}
The use of a constant quantum is equivalent to perform absolute error control in classic discrete time algorithms. However, in several applications the relative error needs to be controlled. This can be achieved in QSS algorithms using \emph{logarithmic quantization} \citep{kofman2009relative}, where the quantum becomes larger as the absolute value of the state $|x_i(t)|$ grows. 

For this purpose, the quantum is computed using two parameters $\dQrel$ and $\dQabs$ where the first one is multiplied by the absolute value of the state and the second one defines the minimum value the quantum can take (for the case in which the state has a value near zero). These parameters have same role than the relative and absolute tolerance of classic variable step size ODE solvers.  

\subsection{The Stand-Alone QSS Solver}
The Stand-Alone QSS solver \citep{fernandez2014stand}  currently constitutes the most complete and efficient implementation of the QSS family of algorithms. In this environment, models are specified using a restricted subset of the Modelica language \citep{mattsson1998physical}, known as $\mu$-Modelica. The tool automatically translates these specifications into C language encoding the system of ODEs together with the corresponding zero-crossing functions and event-handling routines required for discontinuous dynamics.

Additionally, the tool automatically extracts structural information such as incidence matrices and generates symbolic routines for evaluating the Jacobian matrix. The resulting C code can then be linked either with QSS and LIQSS algorithms of orders one through three, or with well-established conventional solvers such as DOPRI \citep{dormand1980family}, DASSL \citep{petzold1982description}, CVODE \citep{cohen1996cvode}, and IDA \citep{hindmarsh2005sundials}.

An important advantage of this solver is that it provides complete structural information including symbolic sparse Jacobian evaluation, enabling conventional integrators to achieve significantly faster performance compared to other interfaces \citep{kofman2021compact}. Moreover, since it generates the same piece of C code for the model equations used across both QSS-based and classical solvers, the framework offers a rigorous basis for fair performance comparisons between them.

\subsection{Quantum Size and Number of Steps in QSS methods}
The concept of activity was originally proposed as a way to characterize the rate of variation of continuous-time signals \citep{jammalamadaka2003activity}.  
For a signal $x_i(t)$ with integrable derivative, its activity over a time window $[t_0,t_f]$ is given by:

\begin{equation} \label{eq:act1}
    A_{x_i(t_0,t_f)}^{(1)} \triangleq \int_{t_0}^{t_f} \left |\cfrac{dx_i(\tau)}{d \tau} \right| \, d \tau.
\end{equation}

This quantity represents the accumulated variation of the signal within the interval, effectively measuring the total displacement across successive extrema.  
In the particular case of monotonic trajectories, Eq.~\eqref{eq:act1} reduces to the difference between the maximum and minimum values.  
As such, activity provides a direct link with the number of intervals required in a piecewise-constant representation of the trajectory, as illustrated in Figure~\ref{fig:qss}(a). 

This notion was extended in \citep{castro2015activity}, introducing the concept of $n$-th order activity that measures the variation of the higher-order derivatives of the signal. This generalization allows for the analysis of approximations based on polynomials of degree up to $n-1$, such as those illustrated in Figure~\ref{fig:qss}(b). Formally, the $n$-th order local activity of a signal $x_i(t)$ on $[t_0,t_f]$ is defined as:

\begin{equation}\label{eq:act_n}
    A_{x_i(t_0,t_f)}^{(n)} \triangleq \bigintss_{t_0}^{t_f} \left | \frac{1}{n!} \cdot \cfrac{d^n x_i(\tau)}{d \tau^n} \right|^{1/n} \, d\tau 
\end{equation}
The concept of $n$--th order activity was used to estimate the minimum number of steps required by QSS and LIQSS algorithms depending on the quantum size.  This lower bound, for a system like that of Eq.\eqref{eq:ode}, is given by the expression

\begin{equation}\label{eq:k_qssOld}
 k_{\mathrm{QSSn}(t_0,t_f)}(\dQ) = \sum_{i=1}^N \cfrac{\lacts{n}}{ (\Delta Q_i)^{\frac{1}{n}}} 
\end{equation}
where $\dQ=[\Delta Q_1,\ldots,\Delta Q_N]^T$. This bound assumes that each segment of $q_i$ starts (or ends) with the same value than $x_i$. Thus, all existing QSS and LIQSS algorithms perform a number of steps that is lower bounded by Eq.\eqref{eq:k_qssOld}.

Recently, in \citep{bergonzi2025activity}, the ideas of activity were further extended and it was shown that the minimum number of steps in any type of quantization-based algorithm (including hypothetical generalized QSS and LIQSS algorithms) are lower bounded by a smaller quantity given by

\begin{equation}\label{eq:k_qss}
 \kQSS{n}(\dQ) = \sum_{i=1}^N \cfrac{\lacts{n}}{2^{\frac{2n-1}{n}}\cdot (\Delta Q_i)^{\frac{1}{n}}} 
\end{equation}

One of the main goals of this work is to develop new QSS algorithms (of LIQSS type) where the number of steps can get closer to the lower bound of Eq.\eqref{eq:k_qss} rather than to that of Eq.\eqref{eq:k_qssOld}. In these new methods, each segment of $q_i$ will not start or end with the same value of $x_i$.



\section{Generalization of LIQSS Methods} \label{sec:main}
In this section, we present a procedure to desing general LIQSS methods based  on the definition of a polynomial $p_i(t)$ that represents the difference between $q_i(t)$ and $x_i(t)$. We first show that with the appropriate choice of this polynomial we can retrieve existing LIQSS algorithms and that they can be extended to perform larger steps. Then, we show a way of defining $p_i(t)$ such that the step size is maximized arriving to a new family of algorithms called Chebyshev LIQSS method.

\subsection{Basic Formulation}
Consider the system of Eq.\eqref{eq:ode}, the QSS approximation of Eq.\eqref{eq:qss} and the linear approximation of its components given by Eqs.\eqref{eq:dxi}--\eqref{eq:linearap}. 
We will assume, without lost of generality, that $t^i_j=0$. Also, in order to simplify the notation, we shall simply write $a_i(t^i_j)=a_i(0)=a_i$.  

We define then the difference polynomial $p_i(t)$ as
\begin{equation} \label{eq:pi}
 p_i(t) = x_i(t)-q_i(t)
\end{equation}
and the different LIQSS methods will be designed such that they verify Eq.\eqref{eq:pi} for a proposed polynomial. This is, rather than starting from a definition for the quantized state trajectory $q_i(t)$, we will provide a definition for $p_i(t)$.

That way, given the desired difference polynomial $p_i(t)$ of order $n$, the problem is to obtain the quantized state trajectory $q_i(t)$ of order $n-1$ such that the resulting state trajectory $x_i(t)$ of order $n$ obtained after integrating Eq.\eqref{eq:dxi} effectively verifies Eq.\eqref{eq:pi}.  

In order to obtain the $n$ coefficients that define each segment of the polynomial $q_i(t)$, a system of equations can be derived by solving Eq.\eqref{eq:pi} for $q_i$, differentiating, and replacing $\dot x_i(t)$ in the linear approximation of Eq.\eqref{eq:dxi}:
\begin{subequations}\label{eq:pq}
\begin{align}
q_i(t)&=x_i(t) - p_i(t)\label{eq:pq0}\\
\dot q_i(t)&= a_i q_i(t)+  u_i(t)-\dot p_i(t) \label{eq:pq1}\\
\ddot q_i(t)&=  a_i \dot q_i(t)+  \dot u_i(t) - \ddot p_i(t)\label{eq:pq2}\\
\dddot q_i(t)&= a_i \ddot q_i(t)+  \ddot u_i(t)-\dddot p_i(t) \label{eq:pq3} 
\end{align}
\end{subequations}
where we limited the derivatives of $q_i(t)$ up to order three since we will not develop algorithms of higher order.

Let us define the auxiliary coefficients
\begin{subequations} \label{eq:ri}
    \begin{align}
        r_1&\triangleq a_i x_i(0)+u_i(0)\\
      r_2&\triangleq a_i^2 x_i(0)+a_i u_i(0)+\dot u_i(0) \\
       r_3 &\triangleq a_i^3 x_i(0)+a_i^2 u_i(0)+a_i \dot u_i(0)+\ddot u_i(0)        
    \end{align}
\end{subequations}
and 
\begin{subequations} \label{eq:si}
    \begin{align}
      s_1&\triangleq a_i p_i(0)+\dot p_i(0)\\   
      s_2&\triangleq a_i^2 p_i(0)+a_i \dot p_i(0) +\ddot p_i(0)\\
      s_3&\triangleq a_i^3 p_i(0)+a_i^2 \dot p_i(0)+a_i \ddot p_i(0)+\dddot p_i(0)  
    \end{align}
\end{subequations}
Notice that the coefficients $r_k$ and $s_k$ only depend on the  initial state $x_i(0)$, the polynomial $p_i(t)$ and the linear approximation parameters. Then, replacing Eq.~\eqref{eq:pq0} in Eq.~\eqref{eq:pq1}  we obtain
\begin{subequations} 
    \begin{align*}
        \dot q_i(0)&= a_i (x_i(0)-p_i(0)+  u_i(0)-\dot p_i(0)\\
        &= a_i x_i(0)+u_i(0) - a_i p_i(0)-\dot p_i(0)\\
        &= r_1 - s_1
    \end{align*}
\end{subequations}
Repeating the same procedure for $\ddot q_i(t) $ and $\dddot q_i(t)$ we obtain:
\begin{subequations} \label{eq:q1q2q3}
    \begin{align}
    q_i(0) &= x_i(0)-p_i(0)\\
   \dot q_i(0) &= r_1 - s_1\\
   \ddot q_i(0)&=  r_2 - s_2\\
   \dddot q_i(0)&= r_3 - s_3    
    \end{align}
\end{subequations}
This provides a system of equations for finding the coefficients of the polynomial $q_i(t)$ depending on the difference polynomial $p_i(t)$, and also depending on the linear approximation represented by $a_i$ and $u_i(t)$.

Following the principles of QSS methods, $p_i(t)$ must be such that $|p_i(t)|\leq \Delta Q_i$ in the interval $[0,t^i_1]$ (where $t^i_1$ is the time of the next change in the quantized state $q_i$). This leads to two possible situations:

\begin{enumerate}
 \item Like in all LIQSS methods, whenever is possible $p_i(t)$ must be selected as a constant polynomial so that $q_i(t)$ and $x_i(t)$ run parallel to each other and no further steps are required. This \emph{equilibrium} situation is characterized by a polynomial $p_i(t)=c$ with $|c|\leq \Delta Q_i$.  In this case, it can be easily seen from Eq.\eqref{eq:si} that
\begin{equation} \label{eq:sn}
   s_n=a_i^n c
\end{equation}
 and the system of Eq.\eqref{eq:q1q2q3} results 
\begin{subequations}\label{eq:pqeq}
\begin{align}
q_i(0)&=x_i(0) - c \\
\dot q_i(0)&=  r_1-a_i c\\
\ddot q_i(0)&= r_2 -a_i^2 c\\
\dddot q_i(0)&= r_3 - a_i^3 c
\end{align}
\end{subequations}

Recalling that in the $n$-th order algorithm $\frac{\diff ^n q_i}{\diff t^n}=0$, it results $r_n = s_n = a_i^n c$, from which, assuming that $a_i\neq 0$, we have
\begin{equation} \label{eq:c}
   c = \frac{r_n}{a_i^n}  
\end{equation}
 where the condition $|c|\leq \Delta Q_i$ must be verified. Thus, the equilibrium solution for a method of order $n$ exists if and only if
\begin{equation} \label{eq:equil}
   \left |\frac{r_n}{a_i^n} \right | \leq \Delta Q_i 
\end{equation}
 when $a_i\neq 0$. 
 
 In case $a_i=0$, according to Eq.\eqref{eq:pq}, the equilibrium situation requires that $\frac{\diff ^{n-1} u_i}{\diff t^{n-1}}=\frac{\diff ^n q_i}{\diff t^n}=0$, which in turn implies that $r_n=0$. In that case, we can take $q_i(t)=x_i(t)$ since the equilibrium condition is verified for any value of $c$. 
 
 Then, the quantized state trajectories for the equilibrium condition can be characterized as follows:

\begin{subequations}\label{eq:qequil}
\begin{align}
q_i(t)&=
\begin{cases}
    x_i(t) - \cfrac{r_n}{a_i^n}&\text{ if }|r_n|\leq |a_i|^n \Delta Q_i \wedge a_i\neq 0\\
    x_i(t) &\text{ if } r_n=a_i= 0
\end{cases}
\\
\dot q_i(t)&= a_i q_i(t)+  u_i(t) \\
\ddot q_i(t)&=  a_i \dot q_i(t)+  \dot u_i(t)
\end{align}
\end{subequations}
where $r_n$ is defined in Eq.\eqref{eq:ri}

 \item  In case the equilibrium solution for $p_i(t)=c$ with $|c|\leq \Delta Q_i$ does not exists, we will define $q_i(t)$ and the corresponding derivatives (depending on the order of the method) based on the choise of $p_i(t)$.
\end{enumerate}

Taking into account these considerations, we shall develop next the LIQSS and the Chebyshev LIQSS algorithms of order 1 to 3. 

\subsection{Derivation of LIQSS Methods}
In Linearly Implicit QSS methods each quantized state $q_i(t)$ is computed so that it moves towards the state $x_i(t)$ starting from a distance $\Delta Q_i$. Using the definition of Eq.\eqref{eq:pi}, a LIQSS method of order $n$ can be formulated taking
\begin{equation}\label{eq:pliqss}
    p_i(t)=b_n (t_m-t)^n
\end{equation}
where $t_m>0$ is the instant of time at which $q(t)=x(t)$ and $b_n$ is the leading term of $x_i(t)$ (which is unknown, since it depends on the choice of the quantized state). 

The time derivatives of $p_i(t)$ are
\begin{align*}
    \dot p_i(t)&=-n b_n(t_m-t)^{n-1} \\
    \ddot p_i(t)&=n(n-1) b_n(t_m-t)^{n-2} \\
        \dddot p_i(t)&=-n(n-1)(n-2) b_n(t_m-t)^{n-3} 
\end{align*}
From these equations, we can rewrite
\begin{align*}
     p_i(0)&=b_n t_m^n\\
    \dot p_i(0)&=-n\frac{p_i(0)}{t_m}\\
   \ddot p_i(0)&=n(n-1) \frac{p_i(0)}{t_m^2}\\
   \dddot p_i(0)&=-n(n-1)(n-2) \frac{p_i(0)}{t_m^3}
\end{align*}
Then, from Eq.\eqref{eq:si} it results
\begin{subequations}
\begin{align}
    s_1&=a_i p_i(0)+\dot p_i(0) = a_i p_i(0) - n\frac{p_i(0)}{t_m} \label{eq:s1}\\
    s_2&=a_i s_1+\ddot p_i(0) = a_i^2 p_i(0)- a_i n\frac{p_i(0)}{t_m}+n(n-1) \frac{p_i(0)}{t_m^2}\label{eq:s2}\\
    s_3&=a_i s_2+\dddot p_i(0) = a_i^3 p_i(0)- a_i^2 n\frac{p_i(0)}{t_m}+ \nonumber\\
    &+a_i n(n-1) \frac{p_i(0)}{t_m^2}- n(n-1)(n-2)\frac{p_i(0)}{t_m^3} \label{eq:s3}
\end{align}
\end{subequations}

Evaluating $p_i$ at $t=0$, the condition 
\begin{equation}\label{eq:pliqss0}
    |p_i(0)|=|b_n|\cdot t_m^n = \Delta Q_i
\end{equation}
must be verified except for the equilibrium case of Eq.\eqref{eq:qequil}. 

\subsubsection{First order LIQSS method}
In the LIQSS1 method, taking $n=1$ in Eq.\eqref{eq:s1} and recalling that $s_1=r_1$, it results
\begin{equation*}
    s_1= a_i p_i(0) - \frac{p_i(0)}{\tm(1)} =r_1
\end{equation*}
thus,
\begin{equation*}
    \left (\frac{r_1}{p_i(0)}-a_i\right)\tm(1)+1=0
\end{equation*}
Recalling that $|p_i(0)|=\Delta Q_i$ and that $|r_i/a_i|\geq \Delta Q_i$ (otherwise the equilibrium condition of Eq.\eqref{eq:equil} would be verified), then the sign of $p_i(0)$ must be opposite to that of $r_1$ (otherwise $t_m$ would result negative). Then, it results
\begin{equation}
    p_i(0)=-\sign(r_1)\Delta Q_i
\end{equation}
and the equations for $q_i$, taking also into account the equilibrium case of Eq.~\eqref{eq:qequil}, are
\begin{equation}\label{eq:liqss1}
    q_i=
\begin{cases}
    x_i-\cfrac{r_1}{a_i}&\text{ if } |r_1|\leq |a_i|\Delta Q_i \wedge a_i\neq 0\\
    x_i&\text{ if } r_1=a_1= 0\\
    x_i+\sign(r_1)\Delta Q_i&\text{ otherwise }
\end{cases}
\end{equation}
with $r_1=a_i x_i+u_i$. 

\subsubsection{Second order LIQSS method}
Taking $n=2$ in Eqs.\eqref{eq:s2} and recalling that $s_2=r_2$, it results
\begin{equation}
    s_2=  a_i^2 p_i(0)- 2 a_i \frac{p_i(0)}{t_m}+2 \frac{p_i(0)}{t_m^2}=r_2
\end{equation}
thus,
\begin{equation} \label{eq:tm2}
    \left (\frac{r_2}{p_i(0)}-a_i^2\right)t_m^2+2 a_i t_m-2=0
\end{equation}
For this equation to have a real positive solution for $t_m$ requires that the leading term is positive, which is only achieved when $r_2$ and $p_i(0)$ have the same sign. Thus, 
\begin{equation}
    p_i(0)=\sign(r_2)\Delta Q_i
\end{equation}
and the values for $q_i$ and $\dot q_i$, taking also into account the equilibrium case of Eq.\eqref{eq:qequil}, are
\begin{subequations}\label{eq:liqss2}
\begin{align}
q_i&=
\begin{cases}
    x_i - \cfrac{r_2}{a_i^2}&\text{ if }|r_2|\leq |a_i|^2 \Delta Q_i \wedge a_i\neq 0\\
    x_i &\text{ if } r_2=a_i= 0 \\
     x_i - \sign(r_2) \Delta Q_i & \text{ otherwise}
\end{cases}
\\
\dot q_i&=
\begin{cases}
    a_i q_i+  u_i&\text{ if }|r_2|\leq |a_i|^2 \Delta Q_i \\
    a_i q_i + u_i + 2 \cfrac{\sign(r_2)\Delta Q_i}{t_m} & \text{ otherwise}
\end{cases}
\end{align}
\end{subequations}

where $t_m$ is the only positive solution of Eq.\eqref{eq:tm2} with  $r_2=a_i^2 x_i +a_i u_i + \dot u_i$. 

\subsubsection{Third order LIQSS method}
Taking $n=3$ in Eqs.\eqref{eq:s3} and recalling that $s_3=r_3$, it results
\begin{equation} 
    s_3=  a_i^3 p_i(0)- 3 a_i^2 \frac{p_i(0)}{t_m}+6 a_i\frac{p_i(0)}{t_m^2}-6\frac{p_i(0)}{t_m^3}=r_3
\end{equation}
thus,
\begin{equation} \label{eq:tm3}
    \left (\frac{r_3}{p_i(0)}-a_i^3\right)t_m^3+3 a_i^2 t_m^2-6 a_i t_m +6=0
\end{equation}
This equation has solution $t_m>0$ for every $a_i$ provided that the leading term is negative. Thus, the sign of $p_i(0)$ must be opposite to that of $r_3$, resulting
\begin{equation}
    p_i(0)=-\sign(r_3)\Delta Q_i
\end{equation}
and the values for $q_i$, $\dot q_i$ and $\ddot q_i$,
taking also into account the equilibrium case of Eq.\eqref{eq:qequil}, are
\begin{subequations}\label{eq:liqss3}
\begin{align}
q_i&=
\begin{cases}
    x_i - \cfrac{r_3}{a_i^3}&\text{ if }|r_3|\leq |a_i|^3 \Delta Q_i \wedge a_i\neq 0\\
    x_i &\text{ if } r_3=a_i= 0 \\
     x_i + \sign(r_3) \Delta Q_i & \text{ otherwise}
\end{cases}
\\
\dot q_i&=
\begin{cases}
    a_i q_i+  u_i&\text{ if }|r_3|\leq |a_i|^3 \Delta Q_i \\
    a_i q_i + u_i + 3 \cfrac{\sign(r_3)\Delta Q_i}{t_m} & \text{ otherwise}
\end{cases}
\\
\ddot q_i&=
\begin{cases}
    a_i \dot q_i+  \dot u_i&\text{ if }|r_3|\leq |a_i|^3 \Delta Q_i \\
    a_i \dot q_i + \dot u_i - 6 \cfrac{\sign(r_3)\Delta Q_i}{t_m^2} & \text{ otherwise}
\end{cases}
\end{align}
\end{subequations}

where $t_m$ is the only real valued positive solution of Eq.\eqref{eq:tm3} with  $r_3=a_i^3 x_i + a_i^2 u_i + a_i \dot u_i + \ddot u_i$. 

\subsubsection{Quantized State Update Policy}
Equations \eqref{eq:liqss1}, \eqref{eq:liqss2} and \eqref{eq:liqss3} provide the formulas to compute the quantized states in first, second and third order LIQSS methods. However, they do not tell how to compute the time of the next change in the quantized state. Regarding that, there are two alternatives:
\begin{itemize}
    \item In the original formulation  of \citep{migoni2013linearly} LIQSS steps are performed immediately after the quantized state $q_i$ reaches the state $x_i$. In case the state trajectory changes its direction (due to a change in another quantized state) and it starts diverging from $q_i$, then the step is performed when the difference is equal to $2 \Delta Q_i$ (for this reason, LIQSS algorithms had twice the error bound).

    In order to avoid this larger error bound, in this work we propose to perform a step in that case when the difference reaches $\Delta Q_i$ instead of $2\Delta Q_i$.

    \item An alternative to this solution is to perform the steps only when the difference between the state and the quantized state becomes the quantum (irrespective of the fact that $q_i(t)$ reaches $x_i(t)$). That way, larger steps can be performed without increasing the error bound since the difference between $x_i$ and $q_i$ will be bounded by $\Delta Q_i$ anyway. 

    We will call \emph{extended} LIQSS (eLIQSS) to the algorithms that  use this second update policy.
\end{itemize}

\subsection{Chebyshev LIQSS Method}
The central idea of the Chebyshev LIQSS (\CQSS) methods is to use a difference polynomial $p_i(t)$ that maximizes the time interval during which it remains bounded by $\pm\Delta Q_i$. This idea  maximizes the time during which $q_i(t)$ remains close to $x_i(t)$ which maximizes in turn the interval between successive steps, and reduces (at least in theory) the total number of steps. 

\subsubsection{Step size maximization}\label{sec:step_max}
The basis of \CQSS methods is provided by the following result
\begin{theorem}[\citep{bergonzi2025activity}]
Given a signal $x_i(t)$ expressed as a polynomial of degree $n$, there exists a
polynomial $q_i(t)$ of degree less than or equal to $n-1$ satisfying the condition $|q_i(t)-x_i(t)| \leq \Delta Q_i$
in an interval $[t_j, t_j +\Delta t]$ if and only if 
\begin{equation}\label{eq:deltat}
   \Delta t\leq \tm(n)\triangleq \frac{2^{\frac{2n-1}{n}} \Delta Q_i^{\frac{1}{n}}}{\ilact{n}(t_j)} 
 \end{equation}
where 
\begin{equation}\label{eq:inst_act}
    \ilact{n}(t)\triangleq\left | \frac{1}{n!} \cdot \cfrac{\diff^n x_i(t)}{\diff t^n} \right|^{1/n}
\end{equation} 
is the local instantaneous activity of order $n$ of the signal $x_i(t)$.
 \end{theorem}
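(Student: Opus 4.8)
### Proof Strategy

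The plan is to reduce the statement to a classical Chebyshev approximation problem. Write $x_i(t) = \sum_{k=0}^n c_k (t-t_j)^k$ with $c_n = \frac{1}{n!} \frac{\diff^n x_i}{\diff t^n}(t_j)$, so that $|c_n| = (\ilact{n}(t_j))^n$. The quantity we must control is the difference $p_i(t) = x_i(t) - q_i(t)$, where $q_i$ ranges over polynomials of degree at most $n-1$. Since $q_i$ can absorb all coefficients $c_0,\dots,c_{n-1}$ freely, the infimum over admissible $q_i$ of $\max_{t\in[t_j,t_j+\Delta t]} |x_i(t)-q_i(t)|$ equals $|c_n|$ times the infimum over monic degree-$n$ polynomials of their sup-norm on an interval of length $\Delta t$. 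After rescaling the interval $[t_j, t_j+\Delta t]$ to $[-1,1]$, a monic degree-$n$ polynomial in $t$ becomes $(\Delta t/2)^n$ times a monic degree-$n$ polynomial in the rescaled variable.

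The key classical fact I would invoke is the extremal property of Chebyshev polynomials: among all monic polynomials of degree $n$, the scaled Chebyshev polynomial $2^{1-n} T_n$ has the smallest sup-norm on $[-1,1]$, namely $2^{1-n}$, and this minimum is attained only by $2^{1-n}T_n$. Combining the two scalings, the best achievable value of $\max_{[t_j,t_j+\Delta t]} |x_i(t)-q_i(t)|$ is exactly
\begin{equation*}
  |c_n| \cdot \left(\frac{\Delta t}{2}\right)^n \cdot 2^{1-n} = (\ilact{n}(t_j))^n \cdot \frac{(\Delta t)^n}{2^{2n-1}}.
\end{equation*}
A polynomial $q_i$ of degree $\le n-1$ satisfying $|q_i(t)-x_i(t)|\le \Delta Q_i$ on $[t_j,t_j+\Delta t]$ exists if and only if this minimum is $\le \Delta Q_i$, i.e. $(\ilact{n}(t_j))^n (\Delta t)^n / 2^{2n-1} \le \Delta Q_i$. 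Solving for $\Delta t$ gives $\Delta t \le 2^{(2n-1)/n}\Delta Q_i^{1/n}/\ilact{n}(t_j) = \tm(n)$, which is precisely Eq.~\eqref{eq:deltat}.

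For the write-up I would structure it as: (i) reduce to monic polynomials by noting $q_i$ absorbs the lower-order coefficients; (ii) rescale to $[-1,1]$; (iii) cite the Chebyshev extremal theorem for the exact minimal sup-norm; (iv) set the resulting expression equal to $\Delta Q_i$ and invert. One should also handle the degenerate case $c_n = 0$ (where $\ilact{n}(t_j)=0$): then $x_i$ itself has degree $\le n-1$, the difference can be made identically zero, and $\tm(n)=+\infty$, so the bound holds vacuously. The main obstacle — really the only nontrivial point — is the exact extremal value and uniqueness in the Chebyshev minimization; everything else is bookkeeping with affine changes of variable. Since this is quoted as a prior result from \citep{bergonzi2025activity}, in the paper it suffices to state it and refer back, but the argument above is the self-contained justification.
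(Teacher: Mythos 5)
Your argument is correct and follows essentially the same route as the source: the theorem is quoted here from \citep{bergonzi2025activity} without proof, but the sufficiency half is reproduced in the paper's proof of Theorem~2, which constructs exactly the shifted and rescaled Chebyshev polynomial you describe and verifies the leading-coefficient cancellation and the bound $|p_i(t)|\leq \Delta Q_i$ on $[0,\tm(n)]$. Your appeal to the extremal property of monic Chebyshev polynomials (minimal sup-norm $2^{1-n}$ on $[-1,1]$) is the correct ingredient for the necessity direction, and your scaling computation $\left|c_n\right|\left(\Delta t/2\right)^n 2^{1-n} = (\ilact{n}(t_j))^n (\Delta t)^n/2^{2n-1}\leq \Delta Q_i$ recovers the stated bound exactly, including the degenerate case $c_n=0$.
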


That result establishes an upper bound for the time that a state trajectory $x_i(t)$ and a quantized state trajectory $q_i(t)$ can remain close to each other.  Then, the following theorem constructs a polynomial $q_i(t)$ of order $n-1$ (or less) that verifies that upper bound.

\begin{theorem}\label{th:q_i}
    Let $x_i(t)$ be a polynomial of degree $n$ with leading term $b_n$. Consider the 
 polynomial 
 \begin{equation}\label{eq:q_i_definition}
     q_i(t) = x_i(t) - \sign(b_n)\cdot \Delta Q_i \cdot T_n(\frac{2t-\tm(n)}{\tm(n)})
 \end{equation}
where $T_n(\cdot)$ denotes the $n$-th degree Chebyshev polynomial. Then, $q_i(t)$ has the following properties:
\begin{enumerate}
    \item It has degree equal or less than $n-1$.
    \item It verifies the condition $|q_i(t)-x_i(t)| \leq \Delta Q_i$ in the interval $[0, \tm(n)]$ with $\tm(n)$ defined in Eq.\eqref{eq:deltat}. 
\end{enumerate}
\end{theorem}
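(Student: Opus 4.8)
The plan is to handle the two claimed properties in turn, as they are essentially independent.

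For Property~2 I would start from the identity $q_i(t) - x_i(t) = -\sign(b_n)\,\Delta Q_i\, T_n\!\left(\frac{2t-\tm(n)}{\tm(n)}\right)$. Since $\Delta Q_i>0$ and $b_n\neq 0$ force $\tm(n)>0$, the affine substitution $u = \frac{2t-\tm(n)}{\tm(n)}$ carries the interval $[0,\tm(n)]$ bijectively onto $[-1,1]$; the defining sup-norm property $|T_n(u)|\le 1$ for $u\in[-1,1]$ then yields $|q_i(t)-x_i(t)|\le\Delta Q_i$ on the whole interval. This part is immediate.

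For Property~1 the task is to show that the degree-$n$ term of $\sign(b_n)\,\Delta Q_i\, T_n\!\left(\frac{2t-\tm(n)}{\tm(n)}\right)$ equals $b_n t^n$, so that it cancels the leading term of $x_i(t)$ in the difference $q_i(t)$. First I would evaluate the instantaneous activity entering the definition of $\tm(n)$: since $x_i$ has degree $n$ with leading coefficient $b_n$, its $n$-th derivative is the constant $n!\,b_n$, hence $\ilact{n}(t_j) = |b_n|^{1/n}$ and consequently $\tm(n)^n = 2^{2n-1}\,\Delta Q_i/|b_n|$. Next I would invoke the standard fact that $T_n$ has leading coefficient $2^{n-1}$, so that after the affine rescaling by $2/\tm(n)$ the $t^n$ coefficient of $T_n\!\left(\frac{2t-\tm(n)}{\tm(n)}\right)$ is $2^{n-1}(2/\tm(n))^n = 2^{2n-1}/\tm(n)^n = |b_n|/\Delta Q_i$. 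Multiplying by $\sign(b_n)\,\Delta Q_i$ recovers exactly $b_n$, so the two degree-$n$ contributions cancel and $\deg q_i \le n-1$.

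I do not expect a genuine obstacle here; the only delicate point is confirming that the normalizing constant built into $\tm(n)$ — which is precisely why the exponent $\tfrac{2n-1}{n}$ and base $2$ appear in Eq.\eqref{eq:deltat} — is exactly what makes the Chebyshev leading coefficient match $b_n$. It may be worth adding a remark that this is what makes Theorem~\ref{th:q_i} the constructive counterpart of the preceding existential bound: up to scaling, $T_n$ is the unique degree-$n$ polynomial with leading coefficient $2^{n-1}$ and sup-norm $1$ on $[-1,1]$, so this choice of $q_i$ is precisely the one that saturates the bound $\Delta t\le\tm(n)$.
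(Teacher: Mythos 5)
Your proof is correct and follows essentially the same route as the paper's: Property~2 via the affine map of $[0,\tm(n)]$ onto $[-1,1]$ and the sup-norm bound $|T_n|\le 1$, and Property~1 by showing the rescaled Chebyshev term has leading coefficient $2^{2n-1}/\tm(n)^n=|b_n|/\Delta Q_i$ so that multiplication by $\sign(b_n)\,\Delta Q_i$ cancels the leading term of $x_i$. The only cosmetic difference is that you evaluate $\ilact{n}=|b_n|^{1/n}$ first and then substitute, whereas the paper substitutes the definition of $\tm(n)$ directly; the computation is the same.
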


\begin{proof} ~

\begin{enumerate}
    \item The leading term of a Chebyshev polynomial $T_n(t)$ is $2^{n-1}$. Then, the leading term of 
    \begin{equation*}
        T_n(\frac{2t-\tm(n)}{\tm(n)})
    \end{equation*}
    results 
    \begin{equation*}
        c_n=\frac{ 2^{n-1} 2^n }{\tm(n)^n}=\cfrac{ 2^{n-1} 2^n }{\left(\cfrac{2^{\frac{2n-1}{n}} \Delta Q_i^{\frac{1}{n}}}{\ilact{n}(0)}\right)^n }=\frac{(\ilact{n}(0))^n}{\Delta Q_i} = \frac{ \left | \cfrac{1}{n!} \cdot \cfrac{\diff^n x_i(t)}{\diff t^n} \right|}{\Delta Q_i}=\frac{|b_n|}{\Delta Q_i}
    \end{equation*}
Then, the leading term of the polynomial
    \begin{equation*}
        p_i(t)=\sign(b_n)\cdot \Delta Q_i \cdot T_n(\frac{2t-\tm(n)}{\tm(n)})
    \end{equation*}
    is $d_n=\sign(b_n)\cdot \Delta Q_i c_n=\sign(b_n)|b_n|=b_n$ showing that $x_i(t)$ and $p_i(t)$ have the same leading term and then $q_i(t)=x_i(t)-p_i(t)$ has degree equal or less than $n-1$.
    
\item Notice that
\begin{equation*}
   t \in [0,\tm(n)] \implies \frac{2t-\tm(n)}{\tm(n)}\in [-1,1] \implies |T_n(\frac{2t-\tm(n)}{\tm(n)})|\leq 1
\end{equation*}
Then, 

\begin{equation*}
   t \in [0,\tm(n)] \implies |p_i(t)|=\Delta Q_i \cdot |T_n(\frac{2t-\tm(n)}{\tm(n)})|\leq \Delta Q_i
\end{equation*}
\end{enumerate}
completing the proof    
\end{proof}

This last result constitutes the basis for the design of the  Chebyshev LIQSS methods. Using the definition of Eq.~\eqref{eq:pi} and taking into account Eq.~\eqref{eq:q_i_definition}, a \CQSS method of order $n$ can be formulated selecting

 \begin{equation}\label{eq:pi2}
     p_i(t)=\sign(b_n)\cdot \Delta Q_i \cdot T_n(\frac{2t-\tm(n)}{\tm(n)})
 \end{equation}

It is worth mentioning that $x_i(t)$ depends on $q_i(t)$. Thus, nor $b_n$ neither $\tm(n)$ are known in advance. 
\subsubsection{First Order Chebyshev LIQSS Method}

In the first order Chebyshev LIQSS method, taking $n=1$  and  recalling that $T_1(z)=z$, we can obtain from Eq.~\eqref{eq:pi2}  the expression of the polynomial $p_i(t)$ and its first derivative as follows
\begin{subequations}
    \begin{align*}
        p_i(t)&=\sign(b_1)\Delta Q_i \cdot  \frac{2t-\tm(1)}{\tm(1)} \\
        \dot p_i(t)&=\sign(b_1) \Delta Q_i \cdot \frac{2t-\tm(1)}{\tm(1)} \cdot \frac{2}{\tm(1)}
    \end{align*}
\end{subequations}
Then, evaluating these expressions in $t=0$, they result:
\begin{subequations}\label{eq:pi0_cqss} 
    \begin{align}
         p_i(0)&=-\sign(b_1) \Delta Q_i \\
         \dot p_i(0)&=\sign(b_1) \Delta Q_i \frac{2}{\tm(1)} =-p_i(0)\frac{2}{\tm(1)}
    \end{align}
\end{subequations}
and replacing in the expression of $s_1$ of Eq.\eqref{eq:si} we have
\begin{equation*}
           s_1=a_i p_i(0)+\dot p_i(0) = a_i p_i(0)-p_i(0)\frac{2}{\tm(1)} 
\end{equation*}
Recalling that $s_1=r_1$, it results:
 \begin{equation}\label{eq:t1}
     \left(\frac{r_1}{p_i(0)}+a_i\right)\tm(1)+2=0
 \end{equation}
From Eq.~\eqref{eq:pi0_cqss} we can observe that $|p_i(0)|=\Delta Q_i$ and that $|r_i/a_i|\geq \Delta Q_i$ (otherwise the equilibrium condition of Eq.\eqref{eq:equil} would be verified), then the sign of $p_i(0)$ must be opposite to that of $r_1$ (otherwise $\tm(1)$ would result negative). Then, it results
\begin{equation}
    p_i(0)=-\sign(r_1)\Delta Q_i
\end{equation}
and  the equations for $q_i$, taking also into account the equilibrium case of Eq.~\eqref{eq:qequil}, are 
\begin{equation} \label{eq:qiCQSS1}
 q_i=
 \begin{cases}
    x_i-\cfrac{r_1}{a_i}&\text{ if } |r_1|\leq |a_i|\Delta Q_i \wedge a_i\neq 0\\
    x_i&\text{ if } r_1=a_1= 0\\
    x_i+\sign(r_1)\Delta Q_i &\text{ otherwise }
 \end{cases}
\end{equation}
with $r_1=a_i x_i+u_i$. Notice that Eq.\eqref{eq:qiCQSS1} is identical to Eq.\eqref{eq:liqss1} so LIQSS1 and CheQSS1 are equivalent. 

\subsubsection{Second order Chebyshev LIQSS Method}
In the second order algorithm, taking $n=2$
 and recalling that $T_2(z)=2z^2-1$, we can obtain from Eq.~\eqref{eq:pi2} the expression of the polynomial $p_i(t)$ and its derivatives as follows

\begin{subequations}
    \begin{align*}
        p_i(t)&=\sign(b_2)\Delta Q_i \cdot \left[2\left( \frac{2t-\tm(2)}{\tm(2)} \right)^2 -1\right] \\
        \dot p_i(t)&=\sign(b_2) \Delta Q_i \cdot 4 \frac{2t-\tmax(2)}{\tm(2)} \cdot \frac{2}{\tm(2)}\\
        \ddot p_i(t)&=\sign(b_2) \Delta Q_i \cdot \frac{16}{\tm(2)^2} 
    \end{align*}
\end{subequations}

Evaluating the last expressions for $t=0$, it results
\begin{subequations}
    \begin{align*}
         p_i(0)&=\sign(b_2) \Delta Q_i\\
         \dot p_i(0)&=-\sign(b_2) \Delta Q_i \frac{8}{\tm(2)}\\
         \ddot p_i(0)&=\sign(b_2) \Delta Q_i \frac{16}{\tm(2)^2}
         \end{align*}
\end{subequations}

Then, replacing with these expressions in Eq.\eqref{eq:sn}, we obtain 
\begin{equation*}
 \begin{split}
       s_1&=a_i p_i(0)+\dot p_i(0) = a_i p_i(0)-p_i(0)\frac{8}{\tm(2)}\\
       s_2&= a_i s_1+\ddot p_i(0) = a_i^2 p_i(0)- a_i p_i(0)\frac{8}{\tm(2)}+p_i(0)\frac{16}{\tm(2)^2}
 \end{split}        
\end{equation*}
 and recalling that $s_2=r_2$, it results:
 \begin{equation}\label{eq:t2}
     \left(\frac{r_2}{p_i(0)}-a_i^2\right)\tm(2)^2+8a_i \tm(2)-16=0
 \end{equation}

This is a quadratic equation in $\tm(2)$ with discriminant 
\begin{equation*}
    \Delta = 64 a_i^2 + 64\left(\frac{r_2}{p_i(0)} - a_i^2\right)
    = 64 \frac{r_2}{p_i(0)},
\end{equation*}
which is positive only when $r_2$ and $p_i(0)$ have the same sign. Thus, 
\begin{equation*}
    p_i(0)=\sign(r_2) \Delta Q_i 
\end{equation*}
and the values of  $q_i$ and $\dot q_i$, taking also into account the equilibrium case of Eq.~\eqref{eq:qequil}, are

\begin{subequations}\label{eq:cqss2}
\begin{align}
q_i&=
\begin{cases}
    x_i - \cfrac{r_2}{a_i^2}&\text{ if }|r_2|\leq |a_i|^2 \Delta Q_i \wedge a_i\neq 0\\
    x_i &\text{ if } r_2=a_i= 0 \\
     x_i - \sign(r_2) \Delta Q_i & \text{ otherwise}
\end{cases}
\\
\dot q_i&=
\begin{cases}
    a_i q_i+  u_i&\text{ if }|r_2|\leq |a_i|^2 \Delta Q_i \\
   a_i q_i + u_i + 8\cfrac{\sign(r_2) \Delta Q_i }{\tm(2)} & \text{ otherwise}
\end{cases}
\end{align}
\end{subequations}

where $\tm(2)$ is the only positive solution of Eq.\eqref{eq:t2} with  $r_2=a_i r_1 + \dot u_i$.


\subsubsection{Third order Chebyshev LIQSS Method}
In the third order algorithm, taking $n=3$ and recalling that $T_3(z)=4z^3-3z$, we can obtain from Eq.~\eqref{eq:pi2} the expression of the polynomial $p_i(t)$ and its derivatives as follows:
\begin{subequations}\label{eq:p123}
    \begin{align}
        p_i(t)&=\sign(b_3)\Delta Q_i \cdot \left[4\left( \frac{2t-\tm(3)}{\tm(3)} \right)^3 -3\left( \frac{2t-\tm(3)}{\tm(3)}\right)\right] \\
        \dot p_i(t)&=\sign(b_3) \Delta Q_i \cdot \left[12\left(\frac{2t-\tm(3)}{\tm(3)}\right)^2-3\right] \cdot \frac{2}{\tm(3)}\\
        \ddot p_i(t)&=\sign(b_3) \Delta Q_i \cdot \frac{48}{\tm(3)} \cdot \frac{2t-\tm(3)}{\tm(3)} \cdot \frac{2}{\tm(3)}\\
        \dddot p_i(t)&=\sign(b_3) \Delta Q_i \cdot  \frac{96}{\tm(3)^2} \cdot \frac{2}{\tm(3)}
    \end{align}
\end{subequations}

Evaluating Eqs.~\eqref{eq:p123} in $t=0$, they result:

\begin{subequations}\label{eq:p123_t0}
    \begin{align}
         p_i(0)&=-\sign(b_3) \Delta Q_i\\
         \dot p_i(0)&=\sign(b_3) \Delta Q_i \frac{18}{\tm(3)}\\
         \ddot p_i(0)&=-\sign(b_3) \Delta Q_i \frac{96}{\tm(3)^2}\\
         \dddot p_i(0)&=\sign(b_3) \Delta Q_i \frac{192}{\tm(3)^3}\\
    \end{align}
\end{subequations}

Then, replacing in the expressions of $s_1,s_2$ and $s_3$:
\begin{equation}
 \begin{split}
       s_1&=a_i p_i(0)+\dot p_i(0) = a_i p_i(0)-p_i(0)\frac{18}{\tm(3)}\\
       s_2&= a_i s_1+\ddot p_i(0) = a_i^2 p_i(0)- a_i p_i(0)\frac{18}{\tm(3)}+p_i(0)\frac{96}{\tm(3)^2}\\
       s_3&=a_is_2+\dddot p_i(0)\\
       &=a_i^3p_i(0)-a_i^2p_i(0)\frac{18}{\tm(3)}+a_ip_i(0)\frac{96}{\tm(3)^2}-p_i(0)\frac{192}{\tm(3)^3}
 \end{split}        
\end{equation}
 and recalling now that $s_3=r_3$, it results:
 \begin{equation}\label{eq:t3}
     \left(\frac{r_3}{p_i(0)}-a_i^3\right)\tm(3)^3+18a_i \tm(3)^2-96\tm(3)+192=0
 \end{equation}






This equation has always a solution $\tm(3)>0$ provided that the leading term is negative. Thus, the sign of $p_i(0)$ must be opposite to that of $r_3$, resulting 
\begin{equation}
    p_i(0) = -\sign(r_3)\Delta Q_i
\end{equation}
and the values for $q_i$, $\dot q_i$ and $\ddot q_i$, taking also into account the equilibrium case of Eq.~\eqref{eq:qequil}, are

\begin{subequations}
\begin{align}
q_i&=
\begin{cases}
    x_i - \cfrac{r_3}{a_i^3}&\text{ if }|r_3|\leq |a_i|^3 \Delta Q_i \wedge a_i\neq 0\\
    x_i &\text{ if } r_3=a_i= 0 \\
     x_i + \sign(r_3) \Delta Q_i & \text{ otherwise}
\end{cases}
\\
\dot q_i&=
\begin{cases}
    a_i q_i+  u_i&\text{ if }|r_3|\leq |a_i|^3 \Delta Q_i \\
    a_i q_i+  u_i- 18\cfrac{\sign(r_3)\Delta Q_i}{\tm(3)}  & \text{ otherwise}
\end{cases}
\\
\ddot q_i&=
\begin{cases}
    a_i \dot q_i+  \dot u_i&\text{ if }|r_3|\leq |a_i|^3 \Delta Q_i \\
    a_i \dot q_i+ \dot u_i +96 \cfrac{\sign(r_3) \Delta Q_i}{\tm(3)^2} & \text{ otherwise}
\end{cases}
\end{align}
\end{subequations}
where $\tm(3)$ is the only real valued positive solution of Eq.\eqref{eq:t3} with  $r_3=a_i^3 x_i + a_i^2 u_i + a_i \dot u_i + \ddot u_i$. 

\subsection{Implementation of the new methods}
The eLIQSS and \CQSS algorithms were implemented in the Stand-Alone QSS Solver tool \citep{fernandez2014stand}. Since the tool has different functions for defining the quantization policy and the time integration procedure, the implementation of the new algorithms only required to define the functions that compute $q_i(t)$ and its derivatives.

All the new methods are included in the current version version of the tool, which is available at \url{https://github.com/CIFASIS/qss-solver}. 

\subsection{LIQSS, eLIQSS and \CQSS Behavior}
In order to illustrate the behavior of the LIQSS, eLIQSS and \CQSS methods, we simulated a simple first-order ODE given by
\begin{equation}\label{eq:firstOrder}
    \dot{x}_a(t) = 1 - x_a(t)
\end{equation}
with a final time $t_f=5$ and different values for the quantum size. 

Figure~\ref{fig:LIQSS123} shows the trajectories produced by the different algorithms with $\Delta Q=10^{-2}$. Figure~\ref{fig:p_LIQSS123} also shows the trajectory of $p(t)=q(t)-x(t)$ for each method, where it can be appreciated the typical Chebyshev polynomial equi-oscillation properties.

\begin{figure*}[ht!]
 \centering
     \begin{subfigure}[b]{0.32\textwidth}
     \includegraphics[width=\textwidth]{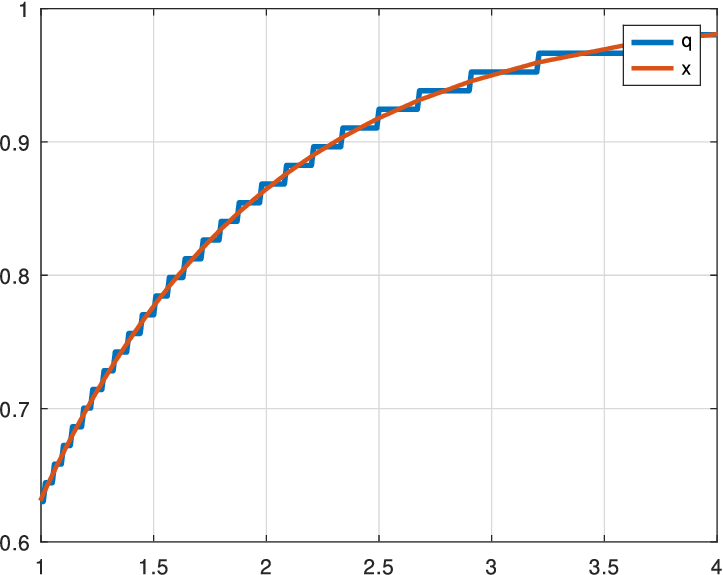}
\caption{ CheQSS1 }
     \label{fig:cqss1}
\end{subfigure} 
 \hfill
\begin{subfigure}[b]{0.32\textwidth}
     \includegraphics[width=\textwidth]{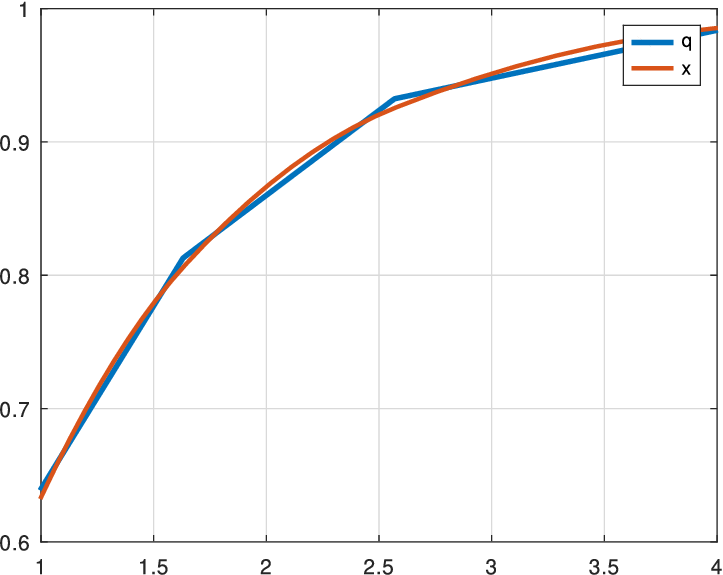}
     \caption{CheQSS2 }
     \label{fig:cqss2}
 \end{subfigure}
  \hfill
\begin{subfigure}[b]{0.32\textwidth}
     \includegraphics[width=\textwidth]{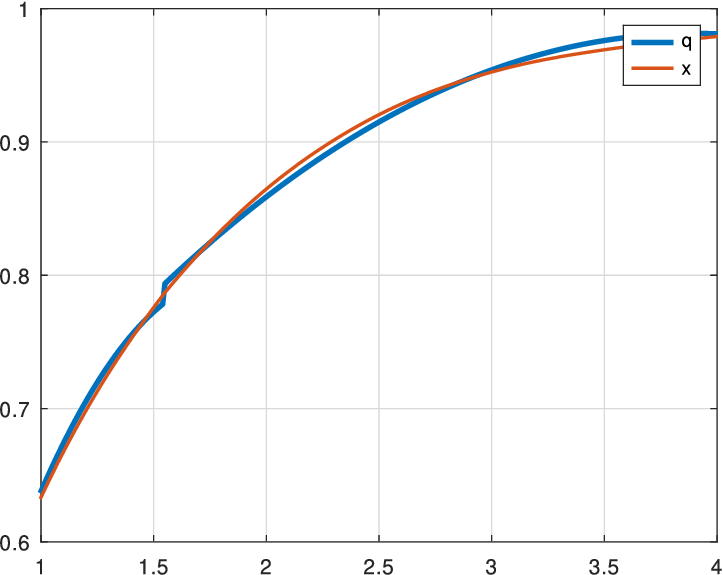}
     \caption{CheQSS3}
     \label{fig:cqss3}
 \end{subfigure}
  \hfill
  \begin{subfigure}[b]{0.32\textwidth}
     \includegraphics[width=\textwidth]{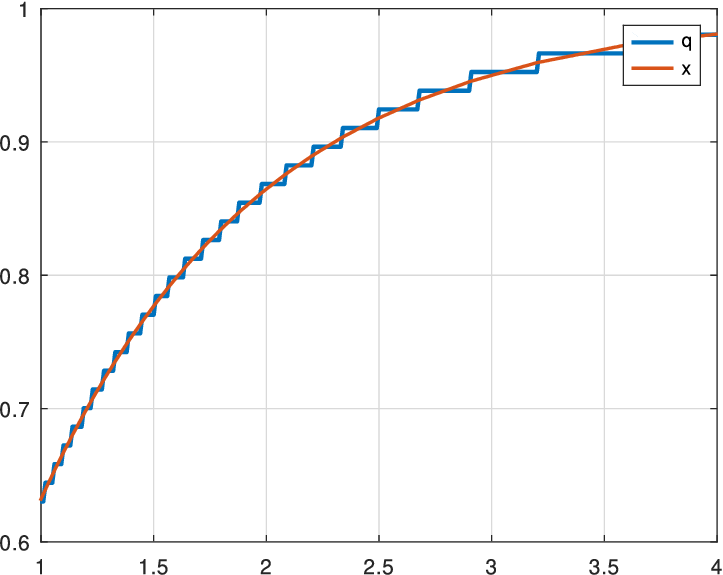}
     \caption{eLIQSS1}
     \label{fig:eliqss1}
     \end{subfigure}
      \hfill
  \begin{subfigure}[b]{0.32\textwidth}
         \includegraphics[width=\textwidth]{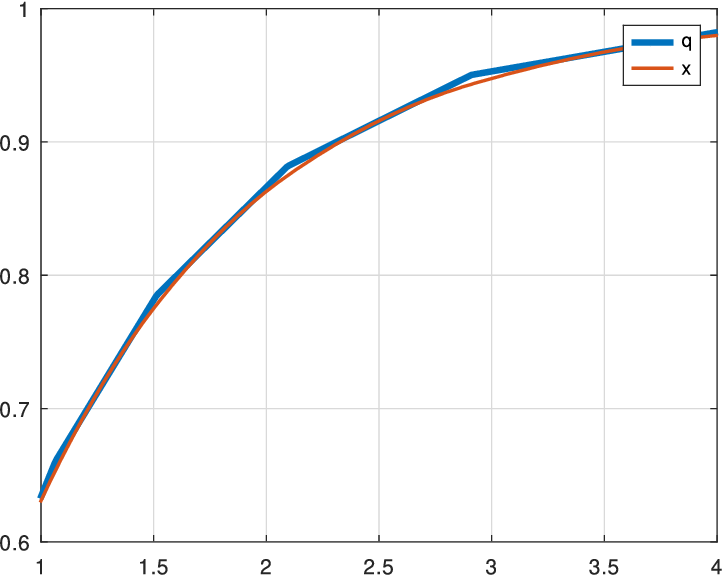}
     \caption{eLIQSS2}
     \label{fig:eliqss2}
 \end{subfigure}
 \hfill
  \begin{subfigure}[b]{0.32\textwidth}
         \includegraphics[width=\textwidth]{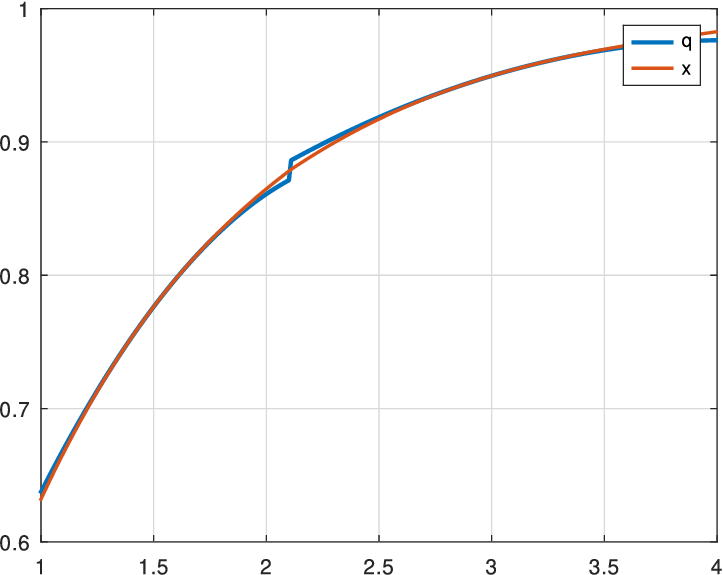}
     \caption{eLIQSS3}
     \label{fig:eliqss3}
 \end{subfigure}
 \hfill
\begin{subfigure}[b]{0.32\textwidth}
     \includegraphics[width=\textwidth]{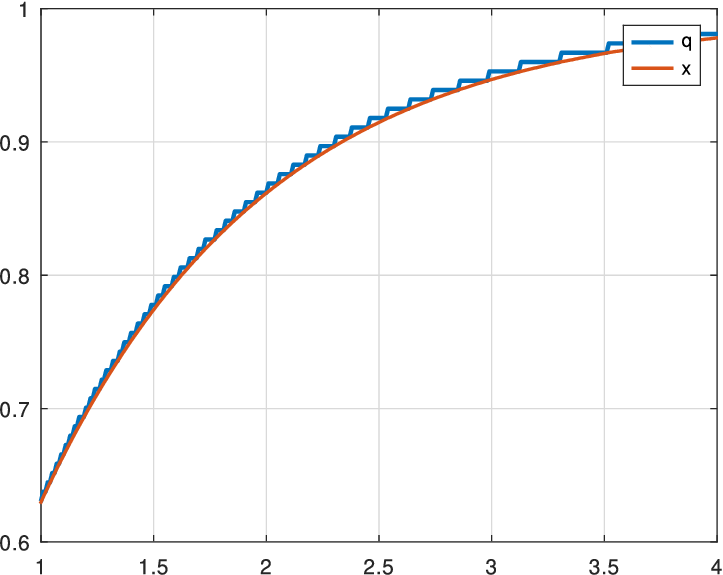}
     \caption{LIQSS1}
     \label{fig:liqss1}
     \end{subfigure}
      \hfill
  \begin{subfigure}[b]{0.32\textwidth}
         \includegraphics[width=\textwidth]{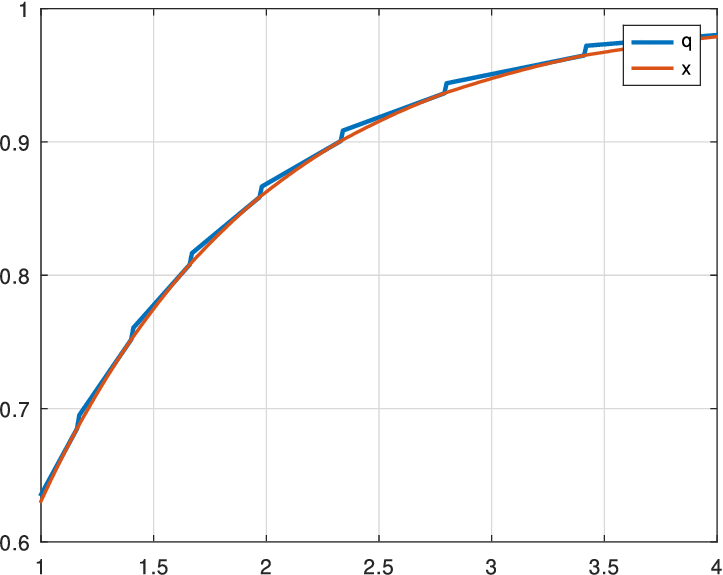}
     \caption{LIQSS2}
     \label{fig:liqss2}
 \end{subfigure}
 \hfill
  \begin{subfigure}[b]{0.32\textwidth}
         \includegraphics[width=\textwidth]{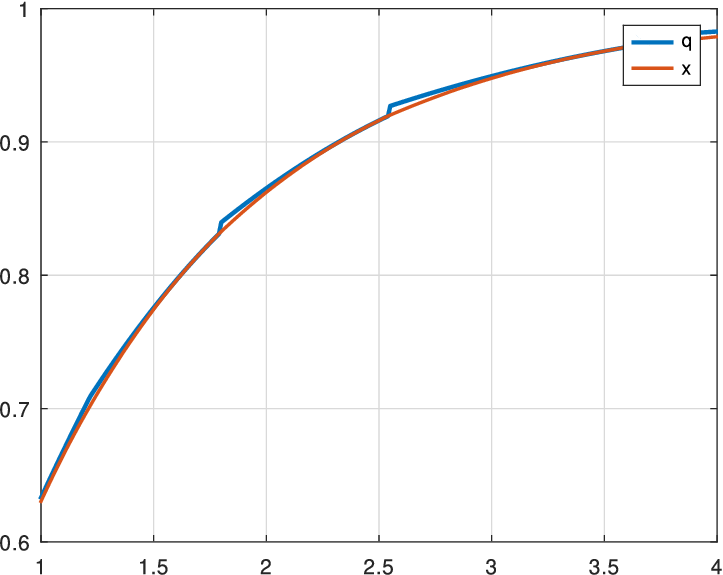}
     \caption{LIQSS3}
     \label{fig:liqss3}
 \end{subfigure}
 \caption{State and Quantized State trajectories of different LIQSS methods (detail) in the simulation of Eq.\eqref{eq:firstOrder}. \label{fig:LIQSS123}}
 \end{figure*}

\begin{figure*}[ht!]
 \centering
     \begin{subfigure}[b]{0.32\textwidth}
     \includegraphics[width=\textwidth]{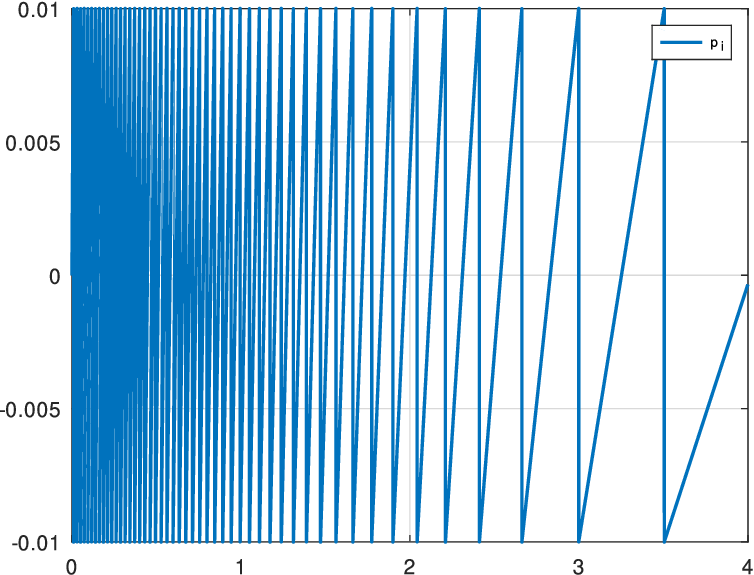}
\caption{ CheQSS1 }
     \label{fig:p_cqss1}
\end{subfigure} 
 \hfill
\begin{subfigure}[b]{0.32\textwidth}
     \includegraphics[width=\textwidth]{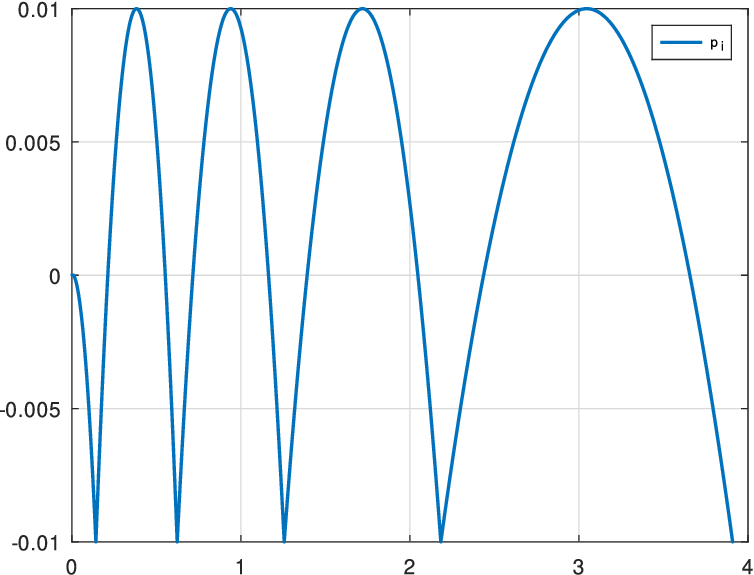}
     \caption{CheQSS2 }
     \label{fig:p_cqss2}
 \end{subfigure}
  \hfill
\begin{subfigure}[b]{0.32\textwidth}
     \includegraphics[width=\textwidth]{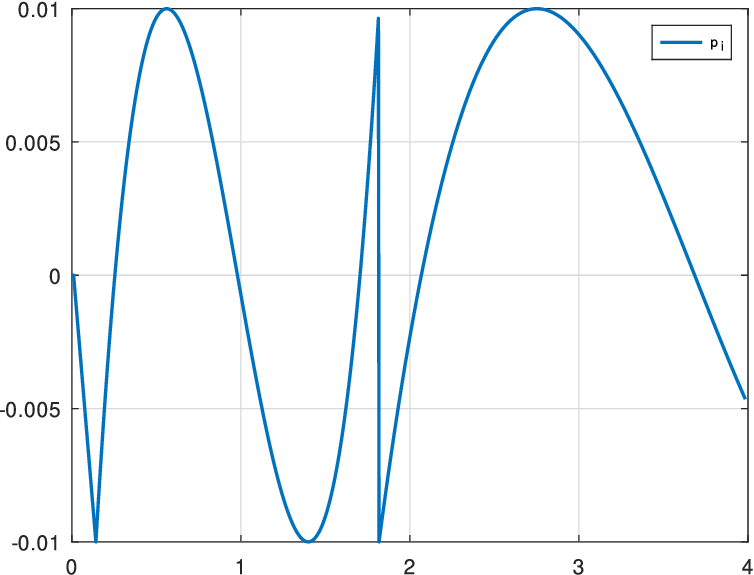}
     \caption{CheQSS3}
     \label{fig:p_cqss3}
 \end{subfigure}
  \hfill
  \begin{subfigure}[b]{0.32\textwidth}
     \includegraphics[width=\textwidth]{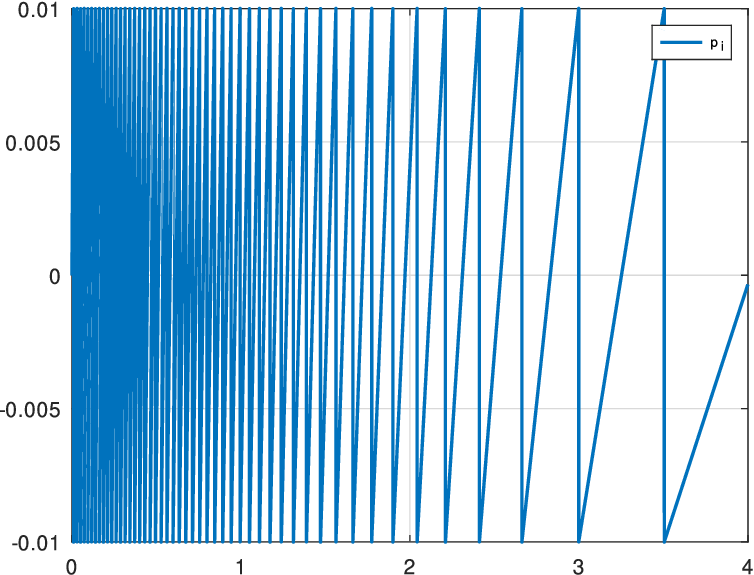}
     \caption{eLIQSS1}
     \label{fig:p_eliqss1}
     \end{subfigure}
      \hfill
  \begin{subfigure}[b]{0.32\textwidth}
         \includegraphics[width=\textwidth]{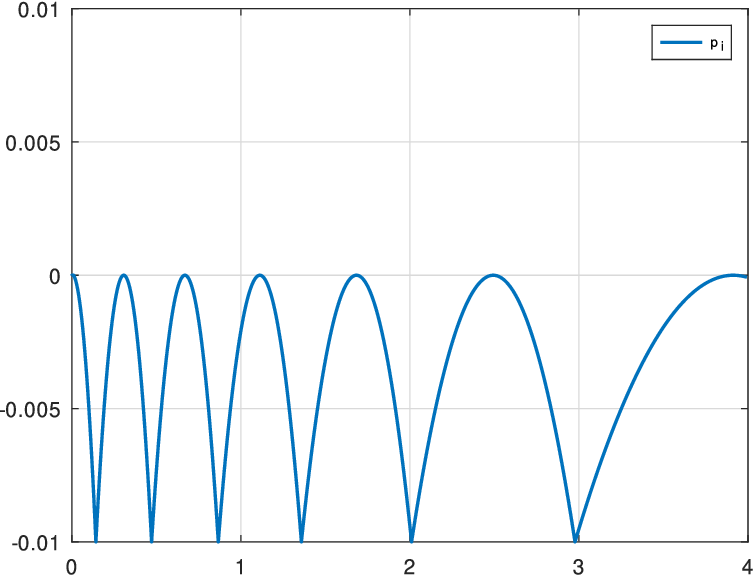}
     \caption{eLIQSS2}
     \label{fig:p_eliqss2}
 \end{subfigure}
 \hfill
  \begin{subfigure}[b]{0.32\textwidth}
         \includegraphics[width=\textwidth]{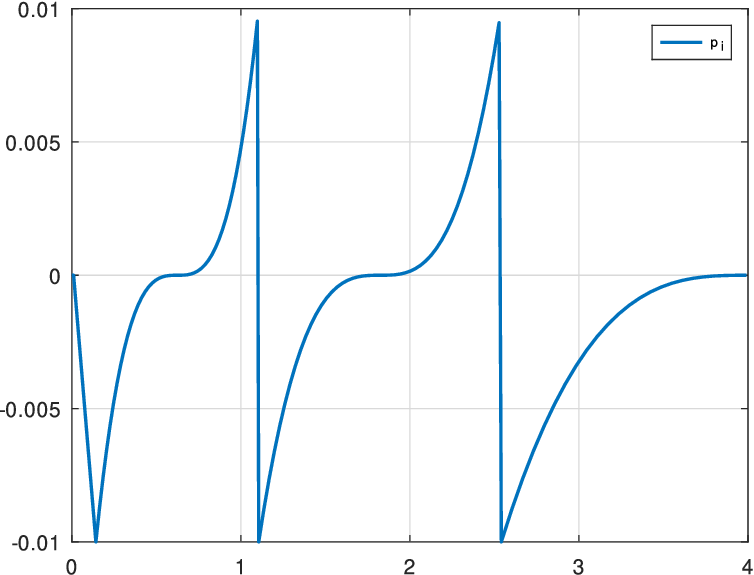}
     \caption{eLIQSS3}
     \label{fig:p_eliqss3}
 \end{subfigure}
 \hfill
\begin{subfigure}[b]{0.32\textwidth}
     \includegraphics[width=\textwidth]{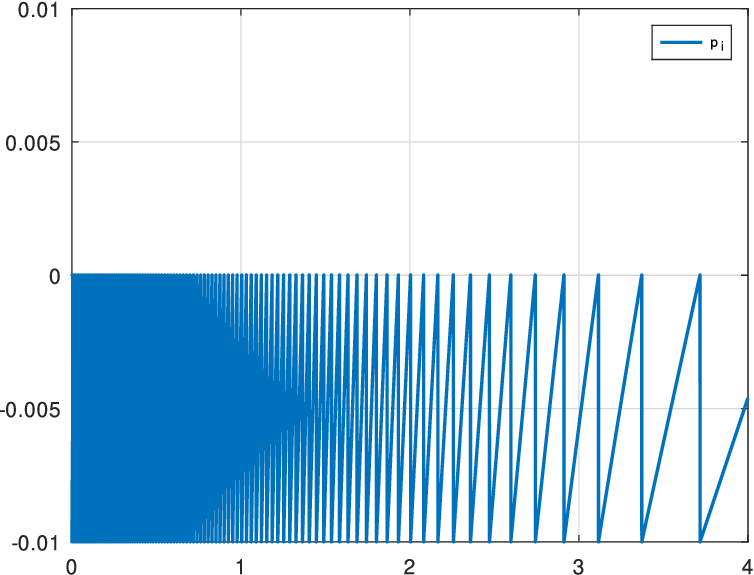}
     \caption{LIQSS1}
     \label{fig:p_liqss1}
     \end{subfigure}
      \hfill
  \begin{subfigure}[b]{0.32\textwidth}
         \includegraphics[width=\textwidth]{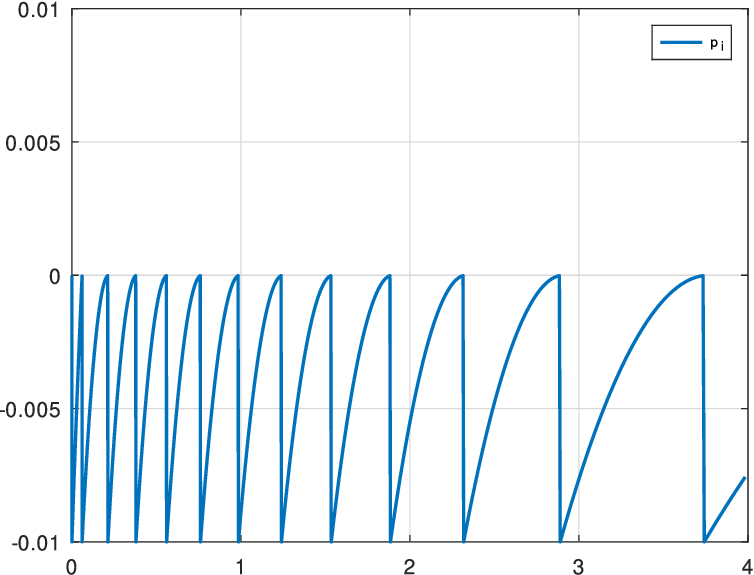}
     \caption{LIQSS2}
     \label{fig:p_liqss2}
 \end{subfigure}
 \hfill
  \begin{subfigure}[b]{0.32\textwidth}
         \includegraphics[width=\textwidth]{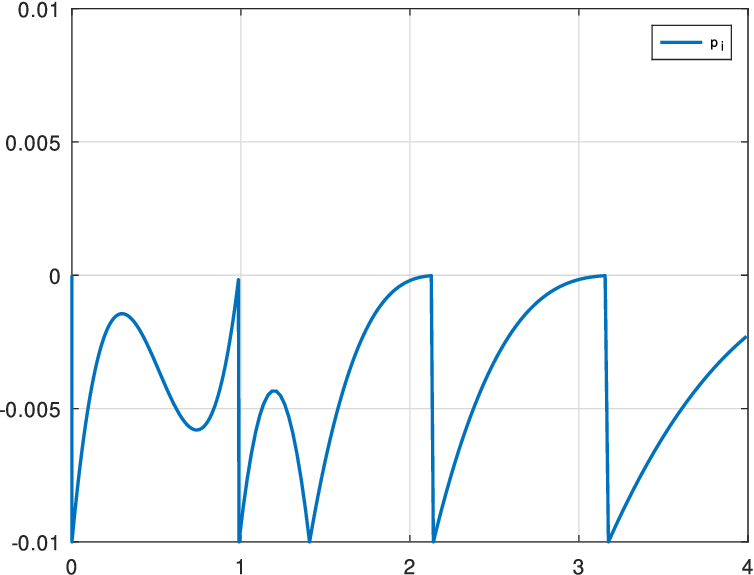}
     \caption{LIQSS3}
     \label{fig:p_liqss3}
 \end{subfigure}
 \caption{Difference between State and Quantized State trajectories of different LIQSS methods (detail) in the simulation of Eq.\eqref{eq:firstOrder}. 
 \label{fig:p_LIQSS123}}
 \end{figure*}

Table~\ref{tab:theoret_vs_real} reports the number of integration steps required by each method under different quantum values, and compares them with the theoretical minimum number of steps computed according to Eq.~\eqref{eq:k_qss}. 

\begin{table}[ht!]
    \caption{Number of steps and their lower bounds in the simulation of Eq.\eqref{eq:firstOrder}.}
    \centering
    \begin{tabular}{@{\extracolsep\fill}cccccc}
    \toprule%
      $n$ & Abs Tol & Theor. Min. & CheQSS$_n$ &eLIQSS$_n$ & LIQSS$_n$  \\
      \midrule
      \multirow{3}{*}{1}  & $10^{-2}$ &50&51	&	51	&	100	\\
       &$10^{-3}$ &497&497	&	497	&	993	\\
       &$10^{-4}$ &4965 &4965	&	4965	&	9924	\\
   \midrule
       \multirow{3}{*}{2}  & $10^{-2}$ &5&7	&	9	&	15	\\
       &$10^{-3}$ &15&17	&	23	&	44 \\
       &$10^{-4}$ &46&48	&	67	&	136 \\
      \midrule
      \multirow{3}{*}{3}  & $10^{-2}$ &2&4	&	5	&	8\\
       &$10^{-3}$ &5&7	&	9	&	16 \\
       &$10^{-4}$ &10&12	&	17	&	33 \\
\botrule
    \end{tabular}
    \label{tab:theoret_vs_real}
\end{table}


We can observe that, as expected, since first-order eLIQSS and \CQSS methods are identical to each other, the number of simulation steps they require is also identical and coincide with the theoretical lower bound.  A significant improvement is achieved compared to the original first-order LIQSS method, with approximately a $50\%$ reduction in the number of steps.

When analyzing the number of steps performed by the second and third order algorithms, we see again that both eLIQSS and \CQSS reduce the number of steps with respect to the non extended LIQSS methods. In addition, the number of steps of \CQSS  are very close to the theoretical minimum values.

\subsection{Properties of eLIQSS and \CQSS methods}
The theoretical properties of QSS methods regarding practical stability, convergence and error bounds are derived from the condition $|q_i(t)-x_i(t)|\leq \Delta Q_i$, that implies that the state quantization is equivalent to the addition of bounded disturbances.  Since all eLIQSS and \CQSS algorithms verify the condition $|p_i(t)|=|q_i(t)-x_i(t)|\leq \Delta Q_i$ (see Figure \ref{fig:p_LIQSS123}) then all these algorithms have the same error bounds and convergence properties of QSS methods.

\section{Examples and Results} \label{sec:results}
In this section, we present two different systems on which we perform several simulations to compare the performance of the methods proposed in this work (eLIQSS and \CQSS) with that of explicit and implicit QSS methods (QSS and LIQSS), as well as with classical time-discretization methods.

In all the simulations we used the \emph{Stand Alone QSS Solver} tool~\citep{fernandez2014stand} running on an Intel Core i5-10400 CPU @ 2.90GHz Intel i5 desktop computer under Ubuntu 22.04 OS. The models used (\texttt{adr.mo} and \verb|snn_rmse.mo|) are part of the distribution so the results below can be straightforwardly reproduced.

\subsection{Advection Diffusion Reaction Model}
The first case study is based on a  one-dimensional advection–diffusion–reaction (ADR) model, representative of continuous systems exhibiting stiff behavior. Such models are commonly encountered in applications involving heat and mass transfer, as well as in problems of continuum mechanics, where the variable of interest may represent, for example, the concentration of a chemical species subject to transport and transformation.

A simple spatial discretization of the 1D ADR model results in the following set of ODEs:
\begin{equation}\label{eq:adr}
\dot x_i=-A \cdot \frac{x_i-x_{i-1}}{\Delta x}  +D \cdot \frac{x_{i+1}-2 x_i+x_{i-1}}{\Delta x^2}
+ R\cdot (x_i^2-x_i^3)
\end{equation}
for $i=2,3,...,N-1$. Here, $A$, $D$, and $R$ are the advection, diffusion, and reaction parameters, respectively. $N$ is the number of spatial sections and $\Delta x$ is the width of each section. The first ($i=1$) and last ($i=N$) points of the grid follow the following equations:
\begin{equation}\label{eq:adr_1}
\dot x_1 =-A \cdot \frac{x_1-1}{\Delta x} 
   + D \cdot \frac{x_{2}-2 x_1+1}{\Delta x^2} +
    R\cdot (x_1^2-x_1^3)
    \end{equation}
\begin{equation}\label{eq:adr_N}
\dot x_N =-A \cdot \frac{x_N-x_{N-1}}{\Delta x} 
+D \frac{2 x_{N-1}-2 x_N}{\Delta x^2}
   + R\cdot (x_N^2-x_N^3)
    \end{equation}
This system was simulated from $t_0=0$ sec. to $t_f=3$ sec. considering  a grid with $N=100$ points, a length $L=10$, and a spatial step of $\Delta x=10/N$ and the following set of parameters:  $A=1$, $D=0.1$, and $R=100$. Figure~\ref{fig:adr} illustrates a subset of the state trajectories of the system.

\begin{figure}[ht!]
    \centering
    \includegraphics[width=0.5\linewidth]{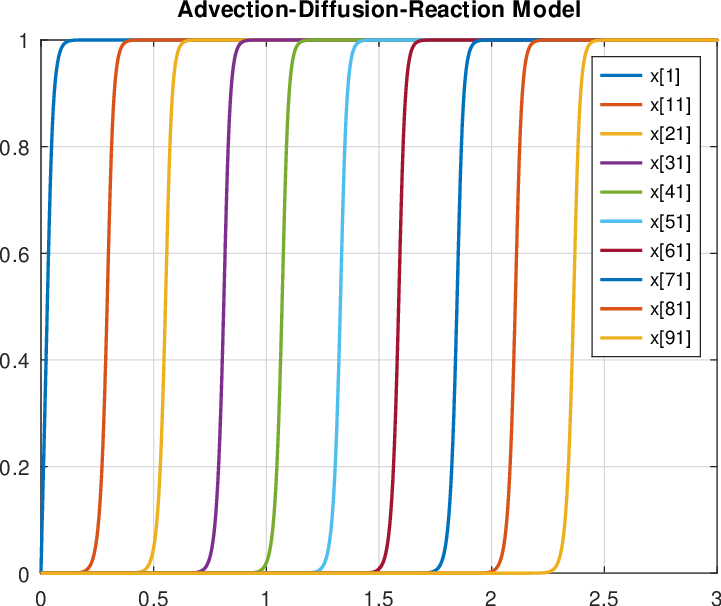}
    \caption{ADR trajectories }
    \label{fig:adr}
\end{figure}

We performed different experiments in order to compare the performance of the novel algorithms (eLIQSS and \CQSS) with that corresponding to the original definition of LIQSS and also with the classic CVODE-BDF solver. Among the classic discrete time methods implemented in the Stand Alone QSS solver (DOPRI, CVODE, IDA and DASSL), the solver that offered the best performance was CVODE-BDF due to the stiff nature of the problem and the use of a sparse representation for the Jacobian matrix.

In the different experiments we varied the tolerance settings using pairs of values $(\dQrel,\dQabs)=  \{(10^{-2},10^{-4}), (10^{-3},10^{-5}), (10^{-4},10^{-6})\}$. We also modified the  order of the QSS methods analyzing the behavior of first, second and third order algorithms. 

Tables~\ref{tab:adr_results1} and \ref{tab:adr_results2} reports the total number of steps performed by each method and the average CPU time required to complete the simulation (computed as the mean of 10 independent runs). In addition, two columns are included to show the error of each simulation measured against a high-accuracy reference solution obtained by simulating the model with CheQSS2 using very tight tolerance settings $(\dQrel,\dQabs)=(10^{-10},10^{-12})$. The errors reported are computed as the average value of the Mean Absolute Error (MAE) of each state variable. 

    \begin{table}[!h]
    \caption{Number of steps and CPU time of different numerical integration algorithms in the simulation of ADR model.}\label{tab:adr_results1}
\begin{tabular*}{\textwidth}{@{\extracolsep{\fill}}cccccc|ccc}
\toprule
 & Rel. &Abs. &\multicolumn{3}{@{}c@{}|}{N° Steps}& \multicolumn{3}{@{}c@{}}{CPU Time (ms)	}\\\cmidrule{4-6} \cmidrule{7-9}
$n$& Tol.	& Tol.&\scriptsize	\textbf{CheQSS$_n$} &\scriptsize	\textbf{eLIQSS$_n$}	& \scriptsize \textbf{LIQSS$_n$} & \scriptsize	\textbf{CheQSS$_n$} &\scriptsize	\textbf{eLIQSS$_n$}&\scriptsize \textbf{LIQSS$_n$}\\
       \midrule
 \multirow{3}{*}{1}& $10^{-2}$	&	$10^{-4}$	&	28701	&	28701	&	56464	&	4.4	&	4.6	&	8.8	\\
&$10^{-3}$&$10^{-5}$&	280812	&	280812	&	559419	&	41.6	&	41.2	&	85.3	\\
&$10^{-4}$&$10^{-6}$&		2801858	&	2801858	&	5589295	&	408.4	&	405.0	&	818.5	\\
\midrule
 \multirow{3}{*}{2}&$10^{-2}$	&$10^{-4}$&	3173	&	3644	&	4324	&	1.3	&	1.4	&	1.6	\\
&$10^{-3}$&$10^{-5}$&	8211	&	9892	&	13009	&	3.0	&	3.4	&	4.6	\\
&$10^{-4}$	&	$10^{-6}$	&23510	&	28617	&	41124	&	7.9	&	9.5	&	13.9	\\
\midrule
 \multirow{3}{*}{3}&$10^{-2}$	&$10^{-4}$	&		3345	&	2548	&	5956	&	3.1	&	2.3	&	5.2	\\
&$10^{-3}$	&$10^{-5}$&	5995	&	4012	&	9183	&	5.6	&	3.5	&	7.9	\\
&$10^{-4}$	&$10^{-6}$&12142	&	7131	&	16050	&	10.6	&	6.1	&	13.5	\\
\midrule
\midrule
&&&	\multicolumn{6}{@{}c@{}}{\small \textbf{CVODE}}  \\
 \midrule
 \multirow{5}{*}{5}&$10^{-2}$	&$10^{-4}$	&	\multicolumn{3}{@{}c@{}|}{314} 	&	\multicolumn{3}{@{}c@{}}{8.1}	\\
&$10^{-3}$	&$10^{-5}$	&	\multicolumn{3}{@{}c@{}|}{414} 	&	\multicolumn{3}{@{}c@{}}{8.2}	\\
&$10^{-4}$	&$10^{-6}$	&	\multicolumn{3}{@{}c@{}|}{649}		&	\multicolumn{3}{@{}c@{}}{10}		\\
&$10^{-5}$	&$10^{-7}$	&\multicolumn{3}{@{}c@{}|}{	870}		&	\multicolumn{3}{@{}c@{}}{15.5}\\

\botrule
    \end{tabular*}
\end{table}

   \begin{table}[!h]
   \caption{Errors of different numerical integration algorithms in the simulation of ADR model.}\label{tab:adr_results2}
\begin{tabular*}{\textwidth}{@{\extracolsep\fill}cccccc}
\toprule
   & Rel. &Abs. & \multicolumn{3}{@{}c@{}}{MAE}\\
   \cmidrule{4-6}
$n$& Tol.	& Tol.&\scriptsize \textbf{CheQSS$_n$ }&	\scriptsize \textbf{eLIQSS$_n$} &\scriptsize \textbf{LIQSS$_n$}\\
        \midrule
 \multirow{3}{*}{1}& $10^{-2}$	&	$10^{-4}$	&	1.8e-04	&	1.8e-04	&	2.2e-03	\\
&$10^{-3}$&$10^{-5}$&	2.2e-05	&	2.2e-05	&	2.3e-04	\\
&$10^{-4}$&$10^{-6}$&		2.7e-06	&	2.7e-06	&	2.3e-05	\\
\hline
 \multirow{3}{*}{2}&$10^{-2}$	&$10^{-4}$&		3.4e-04	&	5.2e-04	&	5.9e-04	\\
&$10^{-3}$&$10^{-5}$&	6.8e-05	&	3.1e-05	&	5.7e-05	\\
&$10^{-4}$	&	$10^{-6}$	&	8.6e-06	&	4.4e-06	&	5.8e-06	\\
\hline
 \multirow{3}{*}{3}&$10^{-2}$	&$10^{-4}$	&			2.8e-04	&	3.7e-04	&	2.7e-04	\\
&$10^{-3}$	&$10^{-5}$&	3.4e-05	&	3.3e-05	&	3.7e-05	\\
&$10^{-4}$	&$10^{-6}$&	4.6e-06	&	2.1e-06	&	4.2e-06	\\
\midrule
\midrule
&&&	\multicolumn{3}{@{}c@{}}{\small \textbf{CVODE}}  \\
 \midrule
 \multirow{5}{*}{5}&$10^{-2}$	&$10^{-4}$	&	\multicolumn{3}{@{}c@{}}{ 1.0e-03	}\\
&$10^{-3}$	&$10^{-5}$	&\multicolumn{3}{@{}c@{}}{	2.4e-04}\\
&$10^{-4}$	&$10^{-6}$	&	\multicolumn{3}{@{}c@{}}{2.0e-05}	\\
&$10^{-5}$	&$10^{-7}$		&	\multicolumn{3}{@{}c@{}}{2.0e-06}	\\

\botrule
    \end{tabular*}%
\end{table}
 
The following observations can be drawn from these results:

\begin{itemize}
    \item Both the extended LIQSS and the Chebyshev LIQSS methods required, in all cases, fewer integration steps and less simulation time than the standard LIQSS method for the same tolerance settings. This improvement is consistent across all tested orders. 
    
    \item CVODE was only faster than first order QSS  algorithms. For orders 2 and 3, all QSS-based methods outperformed CVODE in terms of simulation time under equivalent tolerance settings.
    
    \item Regarding the errors, all QSS methods exhibited a similar behavior with the MAE taking values on the order of magnitude of the absolute tolerance $\dQabs$ and two orders of magnitude less than the relative tolerance $\dQrel$. The only exception is the original LIQSS1, where the errors are one order of magnitude larger. This can be explained by the following fact: Since $x_i(t)$ are strictly increasing, it results that $q_i(t)$ is always larger than $x_i(t)$ introducing a disturbance that is always positive. In all the other algorithms, the disturbance changes its sign as the simulation advances because $p_i(t)$ is centered around zero (\CQSS algorithms) and/or because the higher order derivatives of $x_i$ change their sign (see the solution in Fig.\ref{fig:adr}).    
    
    \item CVODE exhibited error levels comparable to those of the original LIQSS methods, that is, approximately one order of magnitude larger than those obtained with eLIQSS and CQSS under identical tolerance settings.

    \item A direct comparison between eLIQSS and \CQSS reveals the following:
    \begin{itemize}
        \item Both first-order versions behave exactly the same, since the algorithms are identical, as anticipated in the previous section.
        \item For higher orders algorithms (2 and 3), the errors obtained with both methods are of the same order of magnitude in all tested cases. At second order, \CQSS is more efficient, requiring fewer steps and shorter simulation times. At third order, however, eLIQSS becomes more efficient. 
        \item \CQSS methods were designed to perform the largest possible steps maximizing the time that $q_i$ remains close to $x_i$. However, that maximum time only holds in first order linear system. In presence of non-linearities or when in between two steps the state trajectory $x_i$ changes its derivative due to a change in another quantized state $q_j$, the steps may result shorter. For this reason, in some settings eLIQSS performs less steps than \CQSS.
        
    \end{itemize}

\end{itemize}

\subsection{Spiking Neural Network Model}
The second example corresponds to a spiking neural network (SNN) where each neuron is described through a Leaky Integrate-and-Fire (LIF) model, based on the work presented in \citep{schmidt2018, bergonzi2024quantization}.
The state of the $i$-th neuron is characterized by two variables: the membrane potential $V_i(t)$ and the synaptic input current $I^s_i(t)$. Their temporal evolution, in the absence of spiking events, is governed by the following set of differential equations:

\begin{equation} \label{eq:subthreshold_alt}
\begin{split}
\frac{d I^s_i(t)}{dt} &= -\frac{I^s_i(t)}{\tau_s}  \\
\frac{d V_i(t)}{dt} &= -\frac{V_i(t)-E_L}{\tau_m} + \frac{I^s_i(t)}{C_m}
\end{split}
\end{equation}
for $i=1,\ldots,N_e$, where $N_e$ denotes the total number of neurons in the network.
The definitions and values of parameters $\tau_s$, $E_L$, $\tau_m$, and $C_m$ are summarized in Table~\ref{tab:param_alt}.

\begin{table}[ht]
\caption{Parameters employed in the LIF-based spiking neuron model.}\label{tab:param_alt}
\begin{tabular}{@{}lcl@{}}
\toprule
\multicolumn{3}{@{}c@{}}{Model parameters} \\
\midrule
Value && Description \\
\cmidrule{1-1}\cmidrule{3-3}
$ \tau_m = 10$ ms && membrane time constant \\
$\tau_r = 2$ ms && absolute refractory time \\
$\tau_s = 0.5$ ms && synaptic current decay constant \\
$C_m= 250 $pF && membrane capacitance \\
$V_r= -65 $mV && reset potential \\
$ \theta =-50$ mV && firing threshold \\
$E_L=-65$ mV && leak reversal potential\\
$\nu_{bg} = 8  $ spikes/s && Average external
    spike rate \\
$k_{ext}=940$ &&  External inputs per population\\
\botrule
\end{tabular}
\end{table}

This model also incorporates discontinuous dynamics. Specifically, whenever $V_i(t)$ reaches the firing threshold $\theta$, the $i$-th neuron emits a spike, its potential is reset to the value $V_r$, and the spike signal is transmitted to its postsynaptic targets. This event triggers an instantaneous update of the synaptic currents of the receiving neurons. The update rule depends on whether the neuron that emitted the spike is excitatory or inhibitory:
\begin{equation}\label{eq:synapsis_alt}
I^s_j(t^+) \gets I^s_j(t)+J^{\mathrm{exc}}_i,
\end{equation}
for excitatory neurons, and
\begin{equation}\label{eq:synapsis2_alt}
I^s_j(t^+) \gets I^s_j(t)-J^{\mathrm{inh}}_i,
\end{equation}
for inhibitory neurons. The parameters $J^{\mathrm{exc}}_i$ and $J^{\mathrm{inh}}_i$ are the synaptic efficacies, that  follow a Gaussian distributions, consistent with the setup in \citep{bergonzi2024quantization}.

For the case study, we simulated a network of $N_e=1000$ neurons, of which $800$ are of excitatory type, and the remaining $200$ are of inhibitory type. Following the random connectivity pattern described in \citep{bergonzi2024quantization}, each neuron receives spikes from $m=10$ randomly selected synaptic connections, with $80\%$ of these connections originating from excitatory neurons and the remaining $20\%$ from inhibitory neurons. 

The initial states were randomly chosen with uniform distribution $V(t=0)\sim U[-65,-64]$ mV, $I_s(t=0)\sim U[0.4,0.5] $ nA. We also considered that each neuron receives an independent input spike train that follows a Poisson process with stationary rate $\nu_{\mathrm{ext}}=k_{ext}\cdot\nu_{bg}$. 

Figures \ref{fig:V_states} and \ref{fig:Is_states} illustrate a subset of the state trajectories of the system.

\begin{figure*}[ht]
    \centering
\begin{subfigure}[b]{0.45\textwidth}
    \includegraphics[width=\linewidth]{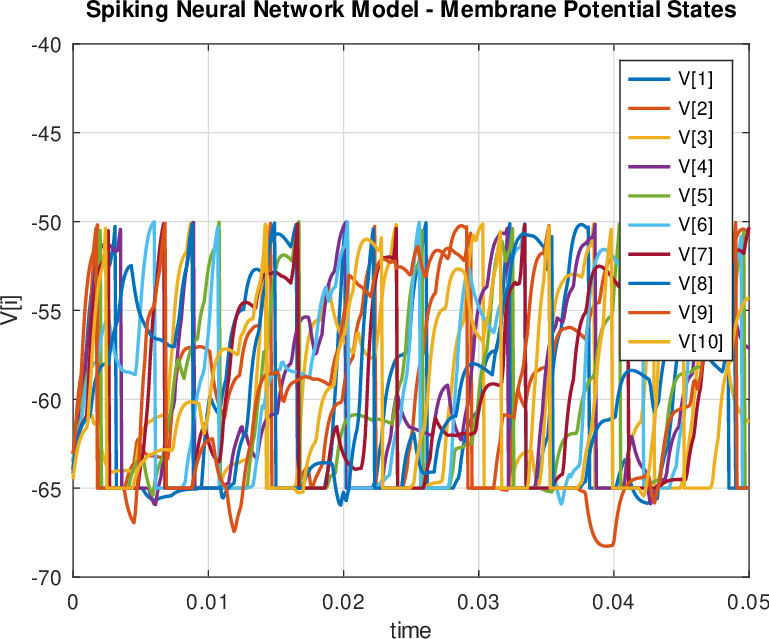}
    \caption{Membrane potentials}
    \label{fig:V_states}
\end{subfigure}    
\begin{subfigure}[b]{0.45\textwidth}
        \includegraphics[width=\linewidth]{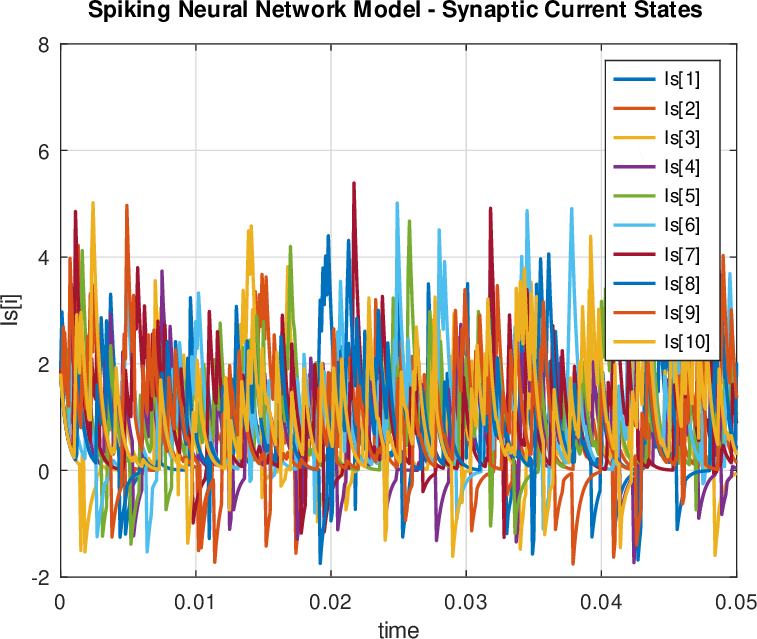}
    \caption{Synaptic currents}
    \label{fig:Is_states}
\end{subfigure}
\caption{A subset of the state trajectories for the SNN model.}
\end{figure*}

We simulated the system with $t_f=0.05$s using different QSS methods: \CQSS, eLIQSS, and QSS and the classic discrete time DOPRI solver. Since this problem is non-stiff, DOPRI exhibited the best performance among classic discrete time algorithms. Also for this reason, we used QSS instead of LIQSS among previously defined QSS algorithms. The simulation were performed for the different orders of the QSS methods (1 to 3) and using different tolerance settings. 

Since the system involves stochastic components, we selected 10 different random seeds and performed 10 simulations for each combination of numerical method, method order, and tolerance values. We then computed the average number of steps and the average simulation time across these 10 runs, as reported in Table~\ref{tab:snn_results1}. 

To evaluate the accuracy of the simulations taking into account the chaotic regime of the solutions, we use as an error metric the difference in the total number of output spikes emitted by the neurons during the simulation. This quantity provides a global indicator of accuracy, since small deviations in the trajectories can lead to significant differences in spike timing and counts. 

To obtain a reference solution, we performed an additional high-precision simulation (using CheQSS3 with \(\Delta Q = 10^{-10}\)), extracted the total number of output spikes, and used this value as a baseline. For each random seed and simulation configuration, we computed the total number of output spikes and compared it with the reference spike count. 

To quantify the simulation error, we computed the Mean Relative Error (MRE) between the spike counts obtained in each run and the reference spike count as follows:
\begin{equation}
\text{MRE} = \frac{1}{10} \sum_{i=1}^{10} \frac{\left| S^{\text{ref}}_i - S_i \right|}{S^{\text{ref}}_i},
\end{equation}
where \(S_i\) denotes the spike count obtained in the \(i\)-th simulation and \(S^{\text{ref}}_i\) the corresponding reference spike count. The resulting MRE values are reported in Table~\ref{tab:snn_results2}.

    \begin{table}[!h]
     \caption{Number of steps and CPU time of different numerical integration algorithms in the simulation of SNN model.}    \label{tab:snn_results1}
    \begin{tabular*}{\textwidth}{@{\extracolsep\fill}ccccc|ccc}
         \toprule
  &Abs. &\multicolumn{3}{@{}c@{}|}{N° Steps}  &\multicolumn{3}{@{}c@{}}{CPU Time (ms)}  \\       \cmidrule{3-8}
$n$& Tol.	&\scriptsize	\textbf{\CQSS$_n$} &\scriptsize	\textbf{eLIQSS$_n$}	& \scriptsize \textbf{QSS$_n$}&\scriptsize	\textbf{\CQSS$_n$} &\scriptsize	\textbf{eLIQSS$_n$}&\scriptsize \textbf{QSS$_n$}\\
        \midrule
 \multirow{5}{*}{2}& $10^{-1}$	&	222420	&	248389	&	487716	&	119.0	&	129.1	&	191.9	\\
&$10^{-2}$&	576976	&	676553	&	342798	&	244.6	&	279.5	&	453.0	\\
&$10^{-3}$&	1713686	&	2026289	&	4080886	&	642.1	&	748.1	&	1296.8	\\
&$10^{-4}$&	5265527	&	6284049	&	2603478	&	1877.9	&	2205.9	&	3851.6	  \\
&$10^{-5}$&	16300337	&	19749735	&	39542559	&	5592.9	&	6748.6	&	10908.7	\\
\midrule
 \multirow{5}{*}{3}&$10^{-1}$	&	156299	&	178947	&	471965	&	156.3	&	165.2	&	312.8	\\
&$10^{-2}$&234559	&	311722	&	695625	&	204.4	&	244.9	&	426.9	\\
&$10^{-3}$	&411516	&	537240	&	167814	&	317.4	&	383.7	&	657.2	\\
&$10^{-4}$&	776038	&	1063599	&	2270492	&	542.9	&	699.7	&	1211.8	\\
&$10^{-5}$&	1558635	&	2220378	&	4625227	&	1040.5	&	1401.2	&	2400.7	\\
\midrule
\midrule
&&	\multicolumn{6}{@{}c@{}}{\small \textbf{DOPRI}}  \\
 \midrule
 \multirow{5}{*}{5}&$10^{-1}$	&	\multicolumn{3}{@{}c@{}|}{59925} 	&	\multicolumn{3}{@{}c@{}}{7823.2}	\\
&$10^{-2}$	&	\multicolumn{3}{@{}c@{}|}{60185} 	&	\multicolumn{3}{@{}c@{}}{7752.9}	\\
&$10^{-3}$		&	\multicolumn{3}{@{}c@{}|}{60277}		&	\multicolumn{3}{@{}c@{}}{7627.9}		\\
&$10^{-4}$		&\multicolumn{3}{@{}c@{}|}{	60314}		&	\multicolumn{3}{@{}c@{}}{7550.2}	\\
&$10^{-5}$		&\multicolumn{3}{@{}c@{}|}{60329}	&	\multicolumn{3}{@{}c@{}}{7487.7}		\\
\botrule
    \end{tabular*}%
\end{table}

    \begin{table}[!h]
     \caption{Error of different numerical integration algorithms in the simulation of SNN model.}    \label{tab:snn_results2}
    \begin{tabular*}{0.75\textwidth}{@{\extracolsep\fill}ccccc}
         \toprule
  &Abs.  & \multicolumn{3}{@{}c@{}}{Mean Relative Error in Output Spikes} \\       
 \cmidrule{3-5}
$n$& Tol.	&\scriptsize \textbf{\CQSS$_n$ }&	\scriptsize \textbf{eLIQSS$_n$} &\scriptsize \textbf{QSS$_n$}\\
        \midrule
 \multirow{5}{*}{2}& $10^{-1}$	&	1.84E-03	&	1.40E-03	&	2.22E-03	\\
&$10^{-2}$&		4.73E-04	&	3.97E-04	&	3.03E-04	\\
&$10^{-3}$&		3.77E-05	&	3.75E-05	&	3.77E-05\\
&$10^{-4}$&		0&	0&	0  \\
&$10^{-5}$&		0	&	0	&	0	\\
\midrule
 \multirow{5}{*}{3}&$10^{-1}$	&	2.05E-03	&	1.31E-03	&	8.93E-04	\\
&$10^{-2}$&	3.80E-04	&	5.11E-04	&	3.60E-04\\
&$10^{-3}$	&	1.13E-04	&	0&	0\\
&$10^{-4}$&		0&	0&	0\\
&$10^{-5}$&		0&	0&	0\\
\midrule
\midrule
&&	\multicolumn{3}{@{}c@{}}{\small \textbf{DOPRI}}  \\
 \midrule
 \multirow{5}{*}{5}&$10^{-1}$	&		\multicolumn{3}{@{}c@{}}{3.79E-05}\\
&$10^{-2}$	&\multicolumn{3}{@{}c@{}}{	1.88E-05}\\
&$10^{-3}$	&	\multicolumn{3}{@{}c@{}}{0}	\\
&$10^{-4}$	&	\multicolumn{3}{@{}c@{}}{0}	\\
&$10^{-5}$	&	\multicolumn{3}{@{}c@{}}{0}	\\
\botrule
    \end{tabular*}%
\end{table}

Several observations can be drawn from these results:

\begin{itemize}
    \item In all cases, the new methods (\CQSS and eLIQSS) were more efficient than QSS, as evidenced by the lower number of steps and shorter total simulation times.
    
    \item Although DOPRI requires fewer steps, each step is computationally more expensive, resulting in significantly higher overall simulation times. This is because, as a classical discrete-time method, DOPRI updates all state variables at every step, whereas in state quantization–based methods, only the variables that show significant changes are updated. 
    
    \item The errors are similar across the three state quantization methods and tend to zero as the precision increases. 
    
    \item DOPRI errors tend rapidly to zero and the number of steps is almost independent on the tolerance. This is due to the fact that the step size is limited by the time between events (which correspond to neuron or input spikes). The small time between events then causes that the step size is small resulting in high accuracy solutions. However, the CPU time is about one order of magnitude larger than that of the third order QSS methods achieving the same accuracy in practice.   
    
    \item When comparing \CQSS and eLIQSS directly, \CQSS consistently requires fewer computational resources: in all tested configurations, eLIQSS uses between $10\%$ and $40\%$ more steps and between $8\%$ and $35\%$ more simulation time. Regarding the accuracy, except for the third order case using $\dQabs=10^{-3}$, both methods produce very similar errors under identical tolerance settings.
\end{itemize}

\section{Conclusions and Future Work} \label{sec:conclusions}
We presented a procedure to develop new LIQSS algorithms by designing the difference polynomial $p_i(t)=x_i(t)-q_i(t)$. Using this procedure, we first redesigned the original LIQSS methods of order 1 to 3 obtaining also an extended version (eLIQSS) that can perform larger steps. With the goal of maximizing the step size, we then designed the algorithms of Chebyshev LIQSS (\CQSS) of order 1 to 3. Taking into account that the difference polynomial verifies $|p_i(t)|=|x_i(t)-q_i(t)|\leq \Delta Q_i$, the novel algorithms (eLIQSS and \CQSS) inherit all the theoretical properties of QSS methods regarding convergence and global error bounds.

The new methods were implemented in the Stand Alone QSS Solver and were tested in two simulation examples: a space-discretized one dimensional advection-diffusion-reaction equation and a spiking neural network model. In both examples the new algorithms outperformed existing QSS methods and classic discrete time solvers.

This works opens different lines for future research. The design procedure can be applied to develop new QSS algorithms with different properties. Also, a deeper study of the performance of the new methods in different applications were QSS methods are usually convenient would be necessary to eventually replace old LIQSS algorithms by their new eLIQSS and \CQSS methods.

\section*{Declarations}
This work was partially funded by grant PICT–2021 00826 (ANPCYT).

\bibliography{references}

@article{bergonzi2024quantization,
  title={Quantization-based simulation of spiking neurons: theoretical properties and performance analysis},
  author={Bergonzi, Mariana and Fern{\'a}ndez, Joaqu{\'\i}n and Castro, Rodrigo and Muzy, Alexandre and Kofman, Ernesto},
  journal={Journal of Simulation},
  volume={18},
  number={5},
  pages={789--812},
  year={2024},
  publisher={Taylor \& Francis}
}

@article{schmidt2018,
  title={A multi-scale layer-resolved spiking network model of resting-state dynamics in macaque visual cortical areas},
  author={Schmidt, Maximilian and Bakker, Rembrandt and Shen, Kelly and Bezgin, Gleb and Diesmann, Markus and van Albada, Sacha Jennifer},
  journal={PLOS Computational Biology},
  volume={14},
  number={10},
  pages={e1006359},
  year={2018},
  publisher={Public Library of Science San Francisco, CA USA}
}

@book{cellier2006continuous,
  title={Continuous system simulation},
  author={Cellier, Fran{\c{c}}ois E and Kofman, Ernesto},
  year={2006},
  publisher={Springer}
}

@article{kofman2001quantized,
  title={Quantized-state systems: a DEVS Approach for continuous system simulation},
  author={Kofman, Ernesto and Junco, Sergio},
  journal={Transactions of the Society for Computer Simulation International},
  volume={18},
  number={3},
  pages={123--132},
  year={2001},
  publisher={Society for Computer Simulation International San Diego, CA, USA}
}

@article{kofman2002second,
  title={A second-order approximation for DEVS simulation of continuous systems},
  author={Kofman, Ernesto},
  journal={Simulation},
  volume={78},
  number={2},
  pages={76--89},
  year={2002},
  publisher={Sage Publications Sage CA: Thousand Oaks, CA}
}

@article{kofman2004discrete,
  title={Discrete event simulation of hybrid systems},
  author={Kofman, Ernesto},
  journal={SIAM Journal on Scientific Computing},
  volume={25},
  number={5},
  pages={1771--1797},
  year={2004},
  publisher={SIAM}
}

@article{fernandez2014stand,
  title={A stand-alone quantized state system solver for continuous system simulation},
  author={Fern{\'a}ndez, Joaqu{\'\i}n and Kofman, Ernesto},
  journal={Simulation},
  volume={90},
  number={7},
  pages={782--799},
  year={2014},
  publisher={Sage Publications Sage UK: London, England}
}

@article{mattsson1998physical,
  title={Physical system modeling with Modelica},
  author={Mattsson, Sven Erik and Elmqvist, Hilding and Otter, Martin},
  journal={Control engineering practice},
  volume={6},
  number={4},
  pages={501--510},
  year={1998},
  publisher={Elsevier}
}

@article{dormand1980family,
  title={A family of embedded Runge-Kutta formulae},
  author={Dormand, John R and Prince, Peter J},
  journal={Journal of computational and applied mathematics},
  volume={6},
  number={1},
  pages={19--26},
  year={1980},
  publisher={Elsevier}
}

@techreport{petzold1982description,
  title={Description of DASSL: a differential/algebraic system solver},
  author={Petzold, Linda R},
  year={1982},
  institution={Sandia National Labs., Livermore, CA (USA)}
}

@article{cohen1996cvode,
  title={CVODE, a stiff/nonstiff ODE solver in C},
  author={Cohen, Scott D and Hindmarsh, Alan C and Dubois, Paul F},
  journal={Computers in physics},
  volume={10},
  number={2},
  pages={138--143},
  year={1996},
  publisher={American Institute of Physics}
}

@article{hindmarsh2005sundials,
  title={SUNDIALS: Suite of nonlinear and differential/algebraic equation solvers},
  author={Hindmarsh, Alan C and Brown, Peter N and Grant, Keith E and Lee, Steven L and Serban, Radu and Shumaker, Dan E and Woodward, Carol S},
  journal={ACM Transactions on Mathematical Software (TOMS)},
  volume={31},
  number={3},
  pages={363--396},
  year={2005},
  publisher={ACM New York, NY, USA}
}

@article{kofman2021compact,
  title={Compact sparse symbolic Jacobian computation in large systems of ODEs},
  author={Kofman, Ernesto and Fern{\'a}ndez, Joaqu{\'\i}n and Marzorati, Denise},
  journal={Applied Mathematics and Computation},
  volume={403},
  pages={126181},
  year={2021},
  publisher={Elsevier}
}

@article{migoni2013linearly,
  title={Linearly implicit quantization-based integration methods for stiff ordinary differential equations},
  author={Migoni, Gustavo and Bortolotto, Mario and Kofman, Ernesto and Cellier, Fran{\c{c}}ois E},
  journal={Simulation Modelling Practice and Theory},
  volume={35},
  pages={118--136},
  year={2013},
  publisher={Elsevier}
}

@article{jammalamadaka2003activity,
  title={Activity characterization of spatial models: Application to the discrete event solution of partial differential equations},
  author={Jammalamadaka, Rajanikanth},
  journal={Master's thesis, University of Arizona, Tucson, Arizona, USA},
  year={2003}
}

@article{castro2015activity,
  title={Activity of order n in continuous systems},
  author={Castro, Rodrigo and Kofman, Ernesto},
  journal={Simulation},
  volume={91},
  number={4},
  pages={337--348},
  year={2015},
  publisher={Sage Publications Sage UK: London, England}
}

@article{di2019improving,
  title={Improving linearly implicit quantized state system methods},
  author={Di Pietro, Franco and Migoni, Gustavo and Kofman, Ernesto},
  journal={Simulation},
  volume={95},
  number={2},
  pages={127--144},
  year={2019},
  publisher={SAGE Publications Sage UK: London, England}
}

@article{bergonzi2025activity,
  title = {Activity Homogeneity: A Measure for Comparing Time Discretization and State Quantization in ODE Simulation},
  author = {Bergonzi, Mariana and Castro, Rodrigo and Kofman, Ernesto},
  journal = {SIMULATION},
  year = {2025},
  publisher = {SAGE Publications Sage UK: London, England},
  url = {https://fceia.unr.edu.ar/~kofman/files/ActHom.pdf},
  note = {in Press. Available online at \url{https://fceia.unr.edu.ar/~kofman/files/ActHom.pdf}}
}

@article{elbellili2025improving,
  title={Improving the modified linearly implicit quantized state system methods: step size, cycle detection, and linear approximation coefficient},
  author={Elbellili, Elmongi and Blondeel, Philippe and Huybrechs, Daan and Lauwens, Ben},
  year={2025},
  journal={SIMULATION},
  pages={00375497251351941},
  publisher={SAGE Publications Sage UK: London, England}
}

@article{kofman2009relative,
  title={Relative error control in quantization based integration},
  author={Kofman, Ernesto},
  journal={Latin American applied research},
  volume={39},
  number={3},
  pages={231--237},
  year={2009},
  publisher={SciELO Argentina}
}

@article{kofman2006third,
  title={A third order discrete event method for continuous system simulation},
  author={Kofman, Ernesto},
  journal={Latin American applied research},
  volume={36},
  number={2},
  pages={101--108},
  year={2006},
  publisher={SciELO Argentina}
}

@article{migoni2015quantization,
  title={Quantization-based simulation of switched mode power supplies},
  author={Migoni, Gustavo and Kofman, Ernesto and Bergero, Federico and Fern{\'a}ndez, Joaqu{\'\i}n},
  journal={Simulation},
  volume={91},
  number={4},
  pages={320--336},
  year={2015},
  publisher={Sage Publications Sage UK: London, England}
}

@article{bergero2016time,
  title={Time discretization versus state quantization in the simulation of a one-dimensional advection--diffusion--reaction equation},
  author={Bergero, Federico and Fern{\'a}ndez, Joaqu{\'\i}n and Kofman, Ernesto and Portapila, Margarita},
  journal={Simulation},
  volume={92},
  number={1},
  pages={47--61},
  year={2016},
  publisher={SAGE Publications Sage UK: London, England}
}

@article{bergonzi2023quantization,
  title={Quantization-based simulation of spiking neurons: theoretical properties and performance analysis},
  author={Bergonzi, Mariana and Fern{\'a}ndez, Joaqu{\'\i}n and Castro, Rodrigo and Muzy, Alexandre and Kofman, Ernesto},
  journal={Journal of Simulation},
  pages={1--24},
  year={2023},
  publisher={Taylor \& Francis}
}

@article{pizzi2025quantized,
  title={Quantized state system methods for stochastic differential equations},
  author={Pizzi, Noelia and Bergonzi, Mariana and Kofman, Ernesto},
  journal={Computational and Applied Mathematics},
  year={2025},
  publisher={Springer},
  note = {in Press}
}

\end{document}